\newtheorem {Theorem}  {Theorem}
\newtheorem {Problem} {Problem}
\newtheorem {Fact}{Fact}
\begin{document}
\baselineskip = 15pt
\bibliographystyle{plain}

\title{Undecidability of Translational Tiling with Three Tiles}
\date{}
\author{Chao Yang\\ 
              School of Mathematics and Statistics\\
              Guangdong University of Foreign Studies, Guangzhou, 510006, China\\
              sokoban2007@163.com, yangchao@gdufs.edu.cn\\
              \\
        Zhujun Zhang\\
              Big Data Center of Fengxian District, Shanghai, 201499, China\\
              zhangzhujun1988@163.com
              }
\maketitle

\begin{abstract}
Is there a fixed dimension $n$ such that translational tiling of $\mathbb{Z}^n$ with a monotile is undecidable? Several recent results support a positive answer to this question. Greenfeld and Tao disprove the periodic tiling conjecture by showing that an aperiodic monotile exists in sufficiently high dimension $n$ [Ann. Math. 200(2024), 301-363]. In another paper [to appear in J. Eur. Math. Soc.], they also show that if the dimension $n$ is part of the input, then the translational tiling for subsets of $\mathbb{Z}^n$ with one tile is undecidable. These two results are very strong pieces of evidence for the conjecture that translational tiling of $\mathbb{Z}^n$ with a monotile is undecidable, for some fixed $n$. This paper gives another supportive result for this conjecture by showing that translational tiling of the $4$-dimensional space with a set of three connected tiles is undecidable. 
\end{abstract}

\noindent{\textbf{Keywords}}:
tiling, translation, $3$-dimension, $4$-dimension, undecidability\\
MSC2020: 52C22, 68Q17

\section{Introduction} 

We study the decidability or undecidability of the decision problem associated with the following translational tiling problem.

\begin{Problem}[Translational tiling of $\mathbb{Z}^n$ with a set of $k$ tiles] \label{pro_main}
A tile is a finite subset of $\mathbb{Z}^n$. Let $k$ and $n$ be fixed positive integers. Given a set $S$ of $k$ tiles in $\mathbb{Z}^n$, is there an algorithm to decide whether $\mathbb{Z}^n$ can be tiled by translated copies of tiles in $S$?
\end{Problem}

Note that Problem \ref{pro_main} can be stated equivalently and more geometrically as a tiling problem in $\mathbb{R}^n$, where a tile is the union of a finite set of unit hypercubes of the form $\Pi_{i=1}^n [z_i,z_i+1] $ ($z_i\in \mathbb{Z}$). A tile may not be connected in the Euclidean space $\mathbb{R}^n$. The answer to Problem \ref{pro_main} is known for some pairs of parameters $n$ and $k$. The problem is decidable for $n=1$ \cite{s93}, and for $(n,k)=(2,1)$ \cite{bn91,b20, gt21, w15}. On the other hand, the problem is undecidable for $n=2$ and $k\geq 8$ \cite{yang23, yang23b, yz24}, for $n=3$ and $k\geq 5$, and for $n=4$ and $k\geq 4$ \cite{yz24b,yz24c}. Two recent results of Greenfeld and Tao \cite{gt24a, gt24b} suggest that the translational tiling problem may be undecidable even for one tile ($k=1$) and some sufficiently large fixed dimension $n$. In fact, they disprove the periodic tiling conjecture \cite{gs16, lw96, s74} by showing the existence of an aperiodic monotile in some extremely large fixed dimension \cite{gt24a}. They also show that if the dimension $n$ is part of the input, the translational tiling for subsets of $\mathbb{Z}^n$ with one tile is undecidable \cite{gt24b}. 

The main contribution of this paper is to expand the knowledge of the undecidability of the translational tiling problem. In particular, we prove that translational tiling of the 4-dimensional space with three connected tiles is undecidable (Theorem \ref{thm_main}).

\begin{Theorem}[Undecidability with Three Tiles]\label{thm_main}
    Translational tiling of $4$-dimensional space with a set of $3$ polyhypercubes is undecidable.
\end{Theorem}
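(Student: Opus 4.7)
The plan is to establish Theorem \ref{thm_main} by reduction from a known undecidable problem. The most natural candidate is the translational tiling problem for $\mathbb{Z}^3$ (or $\mathbb{Z}^4$) with a larger set of tiles that was shown undecidable in \cite{yz24b, yz24c}, or, upstream of that, the Wang domino problem or the halting problem of a Turing machine. The scheme would be to construct three connected polyhypercubes in $\mathbb{Z}^4$ whose valid translational tilings of the whole space are in bijective correspondence with computation histories (or Wang tilings of the plane), so that a tiling exists if and only if the simulated machine does not halt (respectively, if and only if the Wang tile set admits a valid tiling).

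Structurally, I expect the construction to be organized into three roles that the three tiles must play in combination. First, a \emph{skeleton} tile whose translated copies enforce a rigid $\mathbb{Z}^4$-periodic hypercell decomposition of the ambient space, carving out a regular array of residual cavities. Second, two \emph{data} tiles that fit into these cavities and whose boundary geometry — bumps, notches, and offsets exploiting the fourth coordinate — encodes the Wang colors on the four sides of each simulated cell. The matching constraints between adjacent Wang tiles would be realized as the only geometrically compatible way to fit neighboring data tiles into the holes left by the skeleton. Correctness would then be argued by first showing any locally valid placement is forced once the skeleton is laid down (so tilings are in one-to-one correspondence with assignments of Wang tiles to cells), and then showing that the adjacency constraints reproduce precisely the Wang matching rules.

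The main obstacle, and the entire novelty over the 4-tile construction of \cite{yz24b, yz24c}, is eliminating one tile type. In earlier constructions, distinct tiles typically carry distinct roles — grid, horizontal matching, vertical matching, and color carrier — so dropping one role means merging two roles into a single polyhypercube. I would attempt to absorb one of the matching tiles into either the skeleton or a data tile, using the extra fourth dimension to geometrically separate the two merged functions by placing their active features on disjoint 3D slices of the tile. The delicate part will be ruling out all unintended placements: one must verify that the merged tile cannot be shifted into a position that satisfies local connectivity and geometric fit but fails to represent a legal Wang configuration. I anticipate this reducing to a finite but intricate case analysis of how the three tiles can overlap residual cavities, along with a periodicity lemma showing that the skeleton tile forces a unique hyperlattice of translates. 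Once this forcing is established, the correctness of the Wang simulation follows in the standard way, and undecidability is inherited from the source problem.
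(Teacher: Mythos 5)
Your high-level strategy—reduce from Wang's domino problem and exploit the fourth dimension to eliminate a tile from a known four-tile construction—matches the paper's, but the specific construction you sketch diverges in ways that leave the central difficulty unaddressed.

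Your decomposition into one skeleton tile and two ``data'' tiles is not the one the paper uses, and it is unclear how it could work. The paper's three tiles are an \emph{encoder}, a \emph{linker}, and a \emph{filler}. The encoder is not a per-cell data tile; it is a single polyhypercube that encodes the \emph{entire} Wang tile set at once, with one encoding layer per Wang tile, and the ``choice'' of which Wang tile a lattice cell represents is realized by the encoder's vertical offset relative to the rigid lattice forced by the linkers. This offset-selection mechanism is the key device that lets a fixed number of tiles (three) simulate an arbitrary instance of the domino problem, since the encoder's size—not the number of tiles—grows with the instance. Your proposal of ``two data tiles that fit into cavities'' does not explain how an unbounded color alphabet or an unbounded set of Wang tiles would be carried by a constant number of tiles, which is the crux of any such reduction.

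Your idea for merging two tile roles—placing their active features on disjoint 3D slices of a single tile—is also not the mechanism the paper uses, and it has a problem: a single polyhypercube with two ``halves'' in disjoint slices would insert both halves at once everywhere it is placed, whereas the two roles being merged must be invocable \emph{independently} at different lattice sites. The paper's actual mechanism is subtler: it first builds a four-tile 3D construction in which the two linkers ($U$-linker and $D$-linker) differ only by one small building block that encodes a single bit. Lifting to 4D, that distinguishing building block becomes a 4D block $c^{(4)}$ whose dents and bumps live in the time dimension, so that a \emph{single} linker, translated by five frames in time, can do the job previously done by either of the two 3D linkers. It is the linker's freedom to slide in time—while encoders are forced to be time-aligned by additional blocks $V^{(4)}$, $v^{(4)}$, $W^{(4)}$, $w^{(4)}$, and $\mathbb{E}^{(4)}$—that saves the tile, not a spatial superposition of two roles inside one tile. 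Without this forcing (encoders aligned in time, linkers free to slide), unintended placements would not be ruled out, which is precisely the case analysis you flag as ``delicate'' but leave open. As written, your proposal identifies the right source problem and the right goal but does not yet supply the construction or the forcing lemmas that make three tiles suffice.
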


With Theorem \ref{thm_main}, the current state of knowledge on the decidability and undecidability of the translational tiling problem of $\mathbb{Z}^n$ with a set of $k$ tiles for fixed pairs $(k,n)$ can be summarized in Figure \ref{fig_nk}. The green region is known to be decidable, the red region is known to be undecidable, and the yellow region is possibly undecidable. Note that the frontier of the undecidable region is not yet clearly known, especially for the cases of one tile or two tiles. The boundary of the yellow region in Figure \ref{fig_nk} is to demonstrate the idea that as the dimension~$n$ increases, it may need fewer tiles to get undecidable results for translational tiling of $\mathbb{Z}^n$.

\begin{figure}[H]
\begin{center}
\begin{tikzpicture}[ pattern1/.style={draw=red,pattern color=yellow!70, pattern=north east lines}] ]%[scale=1]

%decidable
\draw [fill=green!20,dashed] (11.9,0)--(0,0)--(0,2)--(1,2)--(1,1)--(11.9,1);

%undecidable 
\begin{scope}
    \clip (0,9) rectangle (2,10.9);
\filldraw [yellow!50] (0,11)--(0,8)--(3,8)--(3,11); 
\end{scope}

\begin{scope}
    \clip (2,1) rectangle (11.9,10.9);
\draw [fill=red!20,dashed] (12,1)--(7,1)--(7,2)--(3,2)--(3,3)--(2,3)--(2,4)--(2,12)--(12,12); 
\end{scope}

\begin{scope}
    \clip (0.64,3.9) rectangle (2,10.9);

\filldraw [yellow!50,dashed] plot [smooth] coordinates {(-1,11) (1,8.5) (1.5,6.5) (2,4)  (2.5,1) (9,0) (12,1) (12,15) (13,12)};
\end{scope}

\draw[help lines, color=gray, dashed]  (-0.1,-0.1) grid (11.9,10.9);
\draw[->,ultra thick] (-0.5,0)--(12,0) node[right]{$k$};
\draw[->,ultra thick] (0,-0.5)--(0,11) node[above]{$n$};

\foreach \y in {1,2,3,4,5}
{
\node at (-0.5,\y-0.5) {\y};
}
\node at (-0.9,9.7) {some};
\node at (-0.8,9.3) {large $n$?};

\foreach \x in {1,...,11}
{
\node at (\x-0.5,-0.5) {\x};
}

\end{tikzpicture}
\end{center}
\caption{Translational tiling problem of $\mathbb{Z}^n$ with a set of $k$ tiles.}\label{fig_nk}
\end{figure}
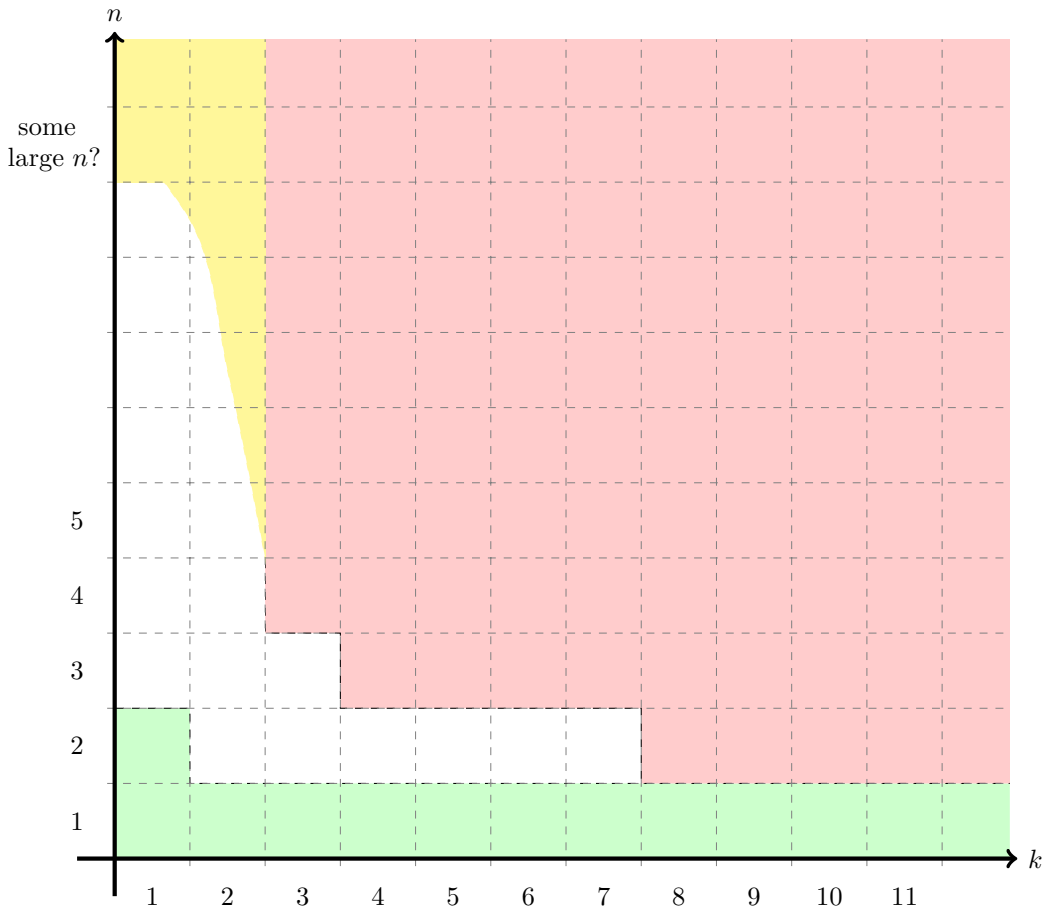

Before proving Theorem \ref{thm_main}, we first prove Theorem \ref{thm_3d_new} on the undecidability of translational tiling of $3$-dimensional space with $4$ tiles. Several novel techniques are introduced in the proof of Theorem \ref{thm_3d_new}. The main result, Theorem \ref{thm_main}, is then proved by lifting the $3$-dimensional tiling of Theorem \ref{thm_3d_new} to the $4$-dimensional space.

\begin{Theorem}[Undecidability with Four Tiles]\label{thm_3d_new}
    Translational tiling of $3$-dimensional space with a set of $4$ polycubes is undecidable.
\end{Theorem}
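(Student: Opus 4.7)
The plan is to establish undecidability by reduction from a known undecidable tiling problem, most naturally the Wang tile domino problem of Berger, or equivalently a variant of the translational tiling problem in a lower tile-count regime that is already known to be undecidable. Given an instance of the source problem --- a finite set of Wang tiles with color-matching constraints --- I would construct four polycubes in $\mathbb{Z}^3$ that admit a translational tiling of $\mathbb{R}^3$ if and only if the source Wang set admits a valid tiling of $\mathbb{Z}^2$. Since the source is undecidable, so is the constructed problem.

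For the construction itself, a natural template is to designate one polycube as a rigid ``skeleton'' whose role is to enforce a canonical unit-cell decomposition of $\mathbb{R}^3$ and to align everything to a grid. The remaining three polycubes then encode the combinatorial content of the Wang tiles. Each Wang tile is modeled by a polycube whose surface carries an arrangement of bumps and notches that mimic color labels; two encoded Wang tiles can sit adjacent only when their geometric signatures interlock, so color matching is realized as physical fitting. The third dimension is exploited to decouple horizontal and vertical matching constraints into separate geometric ``layers,'' and to let a single polycube shape serve several encoding roles depending on its vertical position. After the construction, the verification splits into two parts: soundness, where a valid Wang tile tiling of $\mathbb{Z}^2$ is lifted to an explicit translational tiling of $\mathbb{R}^3$ layer by layer; and completeness, where a rigidity analysis shows that any tiling of $\mathbb{R}^3$ by the four polycubes must respect the grid imposed by the skeleton, forcing the encoding tiles into positions whose projection onto $\mathbb{Z}^2$ yields a valid Wang tile tiling.

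The main obstacle is cutting the tile count from the authors' previous five-tile 3D construction down to four while preserving the full forcing behavior. Each eliminated tile type must have its role silently absorbed by a surviving polycube without opening new ``spurious'' local configurations that would let the four tiles cover $\mathbb{R}^3$ in ways disconnected from any valid Wang tile solution. This typically requires highly asymmetric shape design so that one polycube can play two distinct structural roles only under geometrically forced circumstances, together with a detailed case analysis of local neighborhoods to rule out pathological fits. The ``novel techniques'' alluded to by the authors almost certainly live in this combination of shape-design ingenuity and rigidity argument, and this is where I expect the bulk of the proof's difficulty to lie.
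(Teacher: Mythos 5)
Your overall framework is on target: a reduction from Wang's domino problem, a rigid skeleton of tiles that forces a lattice decomposition of space, geometric bumps and notches that encode colors, and a rigidity argument plus soundness argument. This matches the paper's architecture (the paper's skeleton is formed by two ``linker'' tiles, the color-encoding bumps are the building blocks $u$ and $d$, and a fourth ``filler'' tile handles residual gaps).

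However, there is a genuine gap in your proposal: you write that ``each Wang tile is modeled by a polycube,'' which immediately breaks the bound $k=4$ --- a Wang instance may have arbitrarily many tiles, so you would end up with an unbounded tile count. The decisive idea in the paper, which your sketch gestures toward but does not actually supply (``let a single polycube shape serve several encoding roles depending on its vertical position''), is that \emph{all} Wang tiles in the instance are encoded within a \emph{single} polycube, the ``encoder.'' The encoder is a tall cuboid of building blocks with $2p$ layers (for $p$ Wang tiles): the odd-numbered ``encoding layers'' each encode one Wang tile's four colors as patterns of $u$'s and $d$'s on its north and south faces, separated by ``non-encoding'' layers. The linker lattice has a distinguished ``matching layer,'' and because the encoder has the same vertical period as a linker, exactly one encoding layer of each encoder column aligns with the matching layer; sliding a column of encoders up or down by multiples of a building-block height changes \emph{which} encoding layer is exposed, i.e.\ which Wang tile is simulated there. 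That vertical degree of freedom is exactly the nondeterministic ``tile choice'' of the Wang problem, absorbed into a single encoder shape. Without this mechanism, your construction has no way to keep the tile count at four while still representing an arbitrary Wang tile set, so the reduction as proposed would not go through.

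A secondary, smaller discrepancy: you propose ``one polycube as a rigid skeleton,'' but the paper in fact needs \emph{two} skeleton-like tiles (the $U$-linker and the $D$-linker, differing only in one building block) because the linkers double as the color-matching mechanism in the matching layer --- one linker variant mates a $u$ bump with a $u$ bump, the other a $d$ with a $d$. This dual use (structural rigidity plus color matching) is another place where a tile is saved; collapsing these two linkers into one is precisely what the paper's subsequent lift to dimension $4$ achieves, yielding Theorem~\ref{thm_main} with three tiles.
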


Like many other undecidable results on tiling problems \cite{dl24,gt23, gt24b,jr12,o09,yang23,yang23b,yz24,yz24a,yz24b,yz24c}, our proof relies on the undecidability of Wang's domino problem. A \textit{Wang tile} is a unit square with each edge assigned a color. Given a finite set of Wang tiles (see Figure \ref{fig_w3} for an example), Wang considered the problem of tiling the entire plane with translated copies of the set, under the conditions that the tiles must be edge-to-edge and the color of common edges of any two adjacent Wang tiles must be the same \cite{wang61}. This is known as \textit{Wang's domino problem}. Berger showed that Wang's domino problem is undecidable in general (i.e. the size of the set of Wang tiles can be arbitrarily large\footnote{If the size of the set of Wang tiles is fixed, then Wang's domino problem is decidable, as there are only a finite number of instances.}) in the 1960s.

\begin{Theorem}[\cite{b66}]\label{thm_berger}
    Wang's domino problem is undecidable.
\end{Theorem}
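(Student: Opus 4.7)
The plan is to reduce an already-undecidable problem—most naturally the halting problem for Turing machines—to Wang's domino problem. Given a Turing machine $M$ and input $w$, I would construct a finite set of Wang tiles $T_{M,w}$ such that the plane can be tiled by $T_{M,w}$ if and only if $M$ does not halt on $w$.

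The central difficulty is that one Turing machine computation occupies only a half-plane (time $\times$ tape), whereas a tiling must cover all of $\mathbb{Z}^2$. A naive encoding in which one row represents one computational step fails: a periodic tiling repeats the same configuration forever, so the question ``does this tileset tile the plane?'' collapses to a finite check. To break this, I would first build an aperiodic set of Wang tiles that forces a hierarchical grid structure on any valid tiling. Following Berger (and, in a cleaner form, Robinson), I would design tiles carrying two superimposed layers: a ``corner'' layer whose matching rules force nested $2^n \times 2^n$ squares at every scale $n \geq 1$, and an arrow-propagation layer that enforces alignment between consecutive scales. The key lemma to establish is that any tiling necessarily contains arbitrarily large squares whose interiors are free for further decoration.

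On top of this skeleton I would overlay a computation layer. Inside each level-$n$ square, tiles encode one step of $M$ per row, with the bottom edge carrying the initial configuration $w$ padded with blanks and with local matching rules implementing the transition function of $M$. For each halting state of $M$ I introduce a corresponding tile type which, by construction, has no legal neighbour on one of its edges. Thus if $M$ halts on $w$ in at most $2^n$ steps, every sufficiently large square is forced to contain a halting tile and therefore cannot be completed, so no tiling of the plane exists; conversely, if $M$ never halts, every finite region embeds into some simulation, and a valid tiling of the whole plane is produced by a standard König/compactness argument.

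The hardest part is making the aperiodic skeleton and the computation layer cohabit: one must verify that the two layers communicate only through carefully chosen ``signal'' edges (so that the computation neither leaks past the boundary of its square nor destroys the hierarchical matching), and that the halting-means-no-legal-tile property genuinely survives the superposition. Carrying this out rigorously is the technical core of Berger's original argument; once it is in place, the undecidability of the halting problem transfers immediately to Wang's domino problem, establishing the theorem.
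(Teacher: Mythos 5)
The paper does not prove this theorem: it is Berger's 1966 result, cited as external background (\cite{b66}) and used as the starting point for the reductions in Sections 2 and 3. There is therefore no in-paper proof to compare your proposal against.

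That said, your sketch is a correct high-level account of how the theorem is established in the literature, and it is closer in spirit to Robinson's 1971 simplification than to Berger's original (which you acknowledge). The key points are all present: the obstacle that a naive row-per-step encoding is defeated by periodic tilings; the need for an aperiodic skeleton forcing a hierarchy of nested squares; the overlay of a computation layer inside those squares with a halting state realized as a tile with no legal continuation; and the compactness (K\"onig) argument converting ``every finite patch extends'' into ``a tiling of the whole plane exists.'' One refinement worth stating explicitly: what the hierarchy really buys you is arbitrarily large \emph{free} regions each of which must host a fresh simulation of $M$ started from the blank/initial configuration, so that halting within $2^n$ steps kills the level-$n$ square and hence the whole tiling. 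Making the two layers ``cohabit'' without the computation signals breaking the hierarchical matching (and vice versa) is, as you say, where all the work is, and your sketch correctly flags it as the technical core without claiming to have discharged it. As a proof proposal at the level of detail appropriate for a theorem the paper itself treats as a black box, this is adequate.
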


%%% A set of Wang tiles

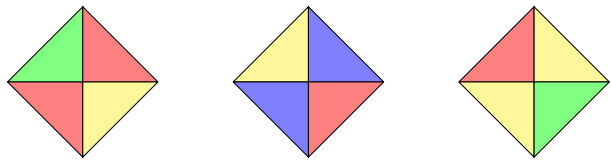
\begin{figure}[ht]
\begin{center}
\begin{tikzpicture}[scale=0.5]

\draw [ fill=green!50] (0,0)--(2,2)--(2,0)--(0,0);
\draw [ fill=red!50] (0,0)--(2,-2)--(2,0)--(0,0);
\draw [ fill=red!50] (4,0)--(2,2)--(2,0)--(4,0);
\draw [ fill=yellow!50] (4,0)--(2,-2)--(2,0)--(4,0);

\draw [ fill=yellow!50] (6+0,0)--(6+2,2)--(6+2,0)--(6+0,0);
\draw [ fill=blue!50] (6+0,0)--(6+2,-2)--(6+2,0)--(6+0,0);
\draw [ fill=blue!50] (6+4,0)--(6+2,2)--(6+2,0)--(6+4,0);
\draw [ fill=red!50] (6+4,0)--(6+2,-2)--(6+2,0)--(6+4,0);

\draw [ fill=red!50] (12+0,0)--(12+2,2)--(12+2,0)--(12+0,0);
\draw [ fill=yellow!50] (12+0,0)--(12+2,-2)--(12+2,0)--(12+0,0);
\draw [ fill=yellow!50] (12+4,0)--(12+2,2)--(12+2,0)--(12+4,0);
\draw [ fill=green!50] (12+4,0)--(12+2,-2)--(12+2,0)--(12+4,0);

\end{tikzpicture}
\end{center}
\caption{A set of $3$ Wang tiles} \label{fig_w3}
\end{figure}

The remainder of this paper is organized as follows. Section \ref{sec_3d} proves Theorem \ref{thm_3d_new} by introducing a new reduction framework that incorporates several new techniques. Section \ref{sec_4d} proves Theorem \ref{thm_main} by lifting the $3$-dimensional tile set and its tiling in the proof of Theorem \ref{thm_3d_new} to $4$-dimensional space. Section \ref{sec_conclu} concludes with a few remarks on future work.

\section{Undecidability of Tiling $3$-dimensional Space}\label{sec_3d}

In this section, we will prove Theorem \ref{thm_3d_new} by constructing a set of $4$ polycubes.

\subsection{Building Blocks}

Several building blocks are defined in this subsection that will be used to build the $4$ polycubes (an encoder, two linkers, and a filler) in the next subsection. A $10\times 10\times 10$ polycube is called a \textit{functional cube} (to be distinguished from a $1\times 1\times 1$ unit cube). Each building block is a subset of a functional cube. A functional cube is a trivial building block. A proper non-empty subset of a functional cube is a non-trivial building block. All non-trivial building blocks that we will define in this subsection come in pairs so that each pair can be placed together to form a functional cube exactly. All the building blocks are defined by using layer diagrams (see Figure \ref{fig_3d_u} for a layer diagram of building block $u$). Each horizontal layer is represented in the layer diagrams by a $10\times 10$ grid, where a gray square means that there is a unit cube in that position, and a white square means that the location is empty. The layers are indexed from bottom to top. In other words, the bottom layer of a building block is the first layer.

The first two pairs, $u$ and $U$, and $d$ and $D$ (see Figure \ref{fig_3d_u}, Figure \ref{fig_3d_U}, Figure \ref{fig_3d_d} and Figure \ref{fig_3d_D}, respectively), are used to encode the colors of Wang tiles.

%%%  a functional cube $u$ upper half

\begin{figure}[H]
\begin{center}
\begin{tikzpicture}[scale=0.4]

% 1st 2nd 3rd 4th 5th 10th layers from bottom  
\foreach \x in {0}
\foreach \y in {0} 
{
\draw [fill=gray!20] (\x+0,10+\y)--(\x+10,10+\y)--(\x+10,0+\y)--(\x+0,0+\y)--(\x+0,10+\y);
}

% 4th && 7th layers from bottom
\foreach \x in {24}
\foreach \y in {0} 
{
\draw [fill=gray!20] (\x+0,10+\y)--(\x+10,10+\y)--(\x+10,0+\y)--(\x+0,0+\y)--(\x+0,10+\y);
\draw [fill=white!20] (\x+6,4+\y)--(\x+4,4+\y)--(\x+4,6+\y)--(\x+6,6+\y)--(\x+6,4+\y);
}

% 5-6th layers from bottom
\foreach \x in {36}
\foreach \y in {0} 
{
\draw [fill=gray!20] (\x+0,10+\y)--(\x+10,10+\y)--(\x+10,0+\y)--(\x+0,0+\y)--(\x+0,10+\y);
\draw [fill=white!20] (\x+7,3+\y)--(\x+3,3+\y)--(\x+3,7+\y)--(\x+7,7+\y)--(\x+7,3+\y);
}

\foreach \x in {0,12,24,36}
\foreach \y in {0,...,10} 
{ 
\draw  (\x+0,0+\y)--(\x+10,0+\y);
\draw  (\x,0)--(\x,10);
\draw  (\x+10,0)--(\x+10,10);
\draw  (\x+9,0)--(\x+9,10);
\draw  (\x+8,0)--(\x+8,10);
\draw  (\x+7,0)--(\x+7,10);
\draw  (\x+6,0)--(\x+6,10);
\draw  (\x+5,0)--(\x+5,10);
\draw  (\x+4,0)--(\x+4,10);
\draw  (\x+3,0)--(\x+3,10);
\draw  (\x+2,0)--(\x+2,10);
\draw  (\x+1,0)--(\x+1,10);
}

\node at (5,-1) {$1$st to $5$th layers};  \node at (17,-1) {$6$th and $7$th layers};  \node at (29,-1) {$8$th layer};  \node at (41,-1) {$9$th layer}; 
\node at (5,-2) {and $10$th layer};

\end{tikzpicture}
\end{center}
\caption{Level-1 layer diagram of $u$.}\label{fig_3d_u}
\end{figure}
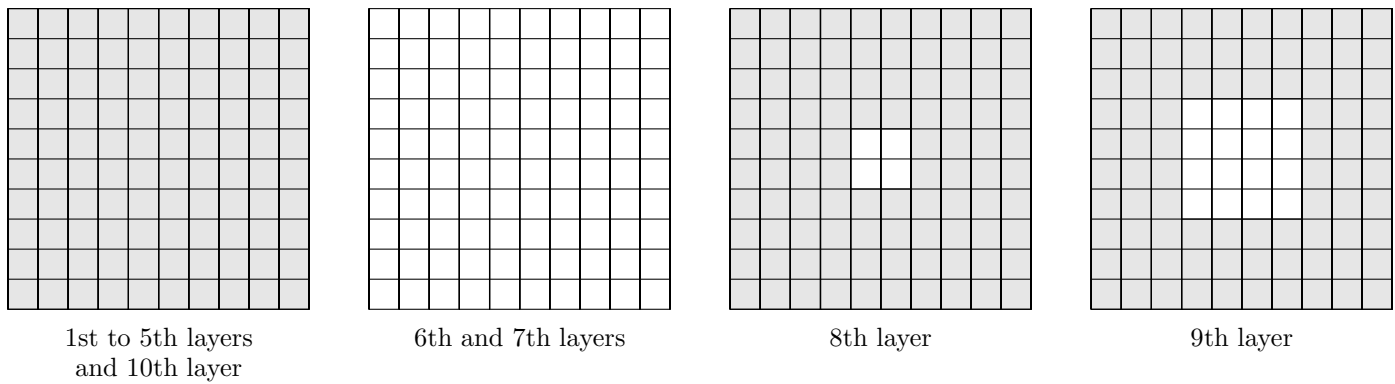

%%%  a functional cube $U$

\begin{figure}[H]
\begin{center}
\begin{tikzpicture}[scale=0.4]

% 1st 2nd 3rd 4th 5th 10th layers from bottom  
\foreach \x in {12}
\foreach \y in {0} 
{
\draw [fill=gray!20] (\x+0,10+\y)--(\x+10,10+\y)--(\x+10,0+\y)--(\x+0,0+\y)--(\x+0,10+\y);
}

% 4th && 7th layers from bottom
\foreach \x in {24}
\foreach \y in {0} 
{
\draw [fill=gray!20] (\x+6,4+\y)--(\x+4,4+\y)--(\x+4,6+\y)--(\x+6,6+\y)--(\x+6,4+\y);
}

% 5-6th layers from bottom
\foreach \x in {36}
\foreach \y in {0} 
{ 
\draw [fill=gray!20] (\x+7,3+\y)--(\x+3,3+\y)--(\x+3,7+\y)--(\x+7,7+\y)--(\x+7,3+\y);
}

\foreach \x in {0,12,24,36}
\foreach \y in {0,...,10} 
{ 
\draw  (\x+0,0+\y)--(\x+10,0+\y);
\draw  (\x,0)--(\x,10);
\draw  (\x+10,0)--(\x+10,10);
\draw  (\x+9,0)--(\x+9,10);
\draw  (\x+8,0)--(\x+8,10);
\draw  (\x+7,0)--(\x+7,10);
\draw  (\x+6,0)--(\x+6,10);
\draw  (\x+5,0)--(\x+5,10);
\draw  (\x+4,0)--(\x+4,10);
\draw  (\x+3,0)--(\x+3,10);
\draw  (\x+2,0)--(\x+2,10);
\draw  (\x+1,0)--(\x+1,10);
}

\node at (5,-1) {$1$st to $5$th layers};  \node at (17,-1) {$6$th and $7$th layers};  \node at (29,-1) {$8$th layer};  \node at (41,-1) {$9$th layer}; 
\node at (5,-2) {and $10$th layer};

\end{tikzpicture}
\end{center}
\caption{Level-1 layer diagram of $U$.}\label{fig_3d_U}
\end{figure}

%%%  a functional cube $d$ lower half

\begin{figure}[H]
\begin{center}
\begin{tikzpicture}[scale=0.4]

% 1st 2nd 3rd 4th 5th 10th layers from bottom  
\foreach \x in {36}
\foreach \y in {0} 
{
\draw [fill=gray!20] (\x+0,10+\y)--(\x+10,10+\y)--(\x+10,0+\y)--(\x+0,0+\y)--(\x+0,10+\y);
}

% 4th && 7th layers from bottom
\foreach \x in {12}
\foreach \y in {0} 
{
\draw [fill=gray!20] (\x+0,10+\y)--(\x+10,10+\y)--(\x+10,0+\y)--(\x+0,0+\y)--(\x+0,10+\y);
\draw [fill=white!20] (\x+6,4+\y)--(\x+4,4+\y)--(\x+4,6+\y)--(\x+6,6+\y)--(\x+6,4+\y);
}

% 5-6th layers from bottom
\foreach \x in {24}
\foreach \y in {0} 
{
\draw [fill=gray!20] (\x+0,10+\y)--(\x+10,10+\y)--(\x+10,0+\y)--(\x+0,0+\y)--(\x+0,10+\y);
\draw [fill=white!20] (\x+7,3+\y)--(\x+3,3+\y)--(\x+3,7+\y)--(\x+7,7+\y)--(\x+7,3+\y);
}

\foreach \x in {0,12,24,36}
\foreach \y in {0,...,10} 
{ 
\draw  (\x+0,0+\y)--(\x+10,0+\y);
\draw  (\x,0)--(\x,10);
\draw  (\x+10,0)--(\x+10,10);
\draw  (\x+9,0)--(\x+9,10);
\draw  (\x+8,0)--(\x+8,10);
\draw  (\x+7,0)--(\x+7,10);
\draw  (\x+6,0)--(\x+6,10);
\draw  (\x+5,0)--(\x+5,10);
\draw  (\x+4,0)--(\x+4,10);
\draw  (\x+3,0)--(\x+3,10);
\draw  (\x+2,0)--(\x+2,10);
\draw  (\x+1,0)--(\x+1,10);
}

\node at (5,-1) {$1$st and $2$nd layers};  \node at (17,-1) {$3$rd layer};  \node at (29,-1) {$4$th layer};  \node at (41,-1) {$5$th to $10$th layers};

\end{tikzpicture}
\end{center}
\caption{Level-1 layer diagram of $d$.}\label{fig_3d_d}
\end{figure}

%%%  a functional cube $D$

\begin{figure}[H]
\begin{center}
\begin{tikzpicture}[scale=0.4]

% 1st 2nd 3rd 4th 5th 10th layers from bottom  
\foreach \x in {0}
\foreach \y in {0} 
{
\draw [fill=gray!20] (\x+0,10+\y)--(\x+10,10+\y)--(\x+10,0+\y)--(\x+0,0+\y)--(\x+0,10+\y);
}

% 4th && 7th layers from bottom
\foreach \x in {12}
\foreach \y in {0} 
{
\draw [fill=gray!20] (\x+6,4+\y)--(\x+4,4+\y)--(\x+4,6+\y)--(\x+6,6+\y)--(\x+6,4+\y);
}

% 5-6th layers from bottom
\foreach \x in {24}
\foreach \y in {0} 
{ 
\draw [fill=gray!20] (\x+7,3+\y)--(\x+3,3+\y)--(\x+3,7+\y)--(\x+7,7+\y)--(\x+7,3+\y);
}

\foreach \x in {0,12,24,36}
\foreach \y in {0,...,10} 
{ 
\draw  (\x+0,0+\y)--(\x+10,0+\y);
\draw  (\x,0)--(\x,10);
\draw  (\x+10,0)--(\x+10,10);
\draw  (\x+9,0)--(\x+9,10);
\draw  (\x+8,0)--(\x+8,10);
\draw  (\x+7,0)--(\x+7,10);
\draw  (\x+6,0)--(\x+6,10);
\draw  (\x+5,0)--(\x+5,10);
\draw  (\x+4,0)--(\x+4,10);
\draw  (\x+3,0)--(\x+3,10);
\draw  (\x+2,0)--(\x+2,10);
\draw  (\x+1,0)--(\x+1,10);
}

\node at (5,-1) {$1$st and $2$nd layers};  \node at (17,-1) {$3$rd layer};  \node at (29,-1) {$4$th layer};  \node at (41,-1) {$5$th to $10$th layers};

\end{tikzpicture}
\end{center}
\caption{Level-1 layer diagram of $D$.}\label{fig_3d_D}
\end{figure}

Obviously, the building blocks $u$ are not connected. However, every building block $u$ is adjacent to another building block (mostly a $10\times 10\times 10$ functional cube) in building a bigger tile in the next subsection. Therefore, the two parts of the building block $u$ are connected through their neighboring building blocks in the bigger tile.

Note that the building blocks $U$ and $D$ are in fact identical except for their location within a $10\times 10\times 10$ functional cube. As their names indicate, $U$ lives upstairs within the functional cube and $D$ lives downstairs in the functional cube. This distinction only makes sense when $U$ and $D$ are attached to other building blocks, as we will see in the next subsection when they are used to construct the \textit{linker} tiles. When $U$ (or $D$) forms a tile by itself and can move freely without restriction within the functional cube, it becomes a \textit{filler} tile, which is one of the four tiles introduced in the next subsection.

The third pair of building blocks, $x$ and $X$, is illustrated in Figure \ref{fig_3d_x} and Figure \ref{fig_3d_X}, respectively. They will be used in the linker tile in the next subsection. Building blocks $x$ and $X$ ensure that the linkers are aligned (vertically aligned) for any two adjacent encoders in the east-west direction as we will see in the next two subsections. 

%%%  a functional cube $x$  (for alignment of LINKER in x direction)

\begin{figure}[H]
\begin{center}
\begin{tikzpicture}[scale=0.4]

% 1st 2nd 3rd 8th 8th 10th layers from bottom  
\foreach \x in {0}
\foreach \y in {0} 
{
\draw [fill=gray!20] (\x+0,10+\y)--(\x+10,10+\y)--(\x+10,0+\y)--(\x+0,0+\y)--(\x+0,10+\y);
}

\foreach \x in {12}
\foreach \y in {0} 
{
\draw [fill=gray!20] (\x+0,10+\y)--(\x+10,10+\y)--(\x+10,0+\y)--(\x+0,0+\y)--(\x+0,10+\y);
\draw [fill=white!20] (\x+3,3+\y)--(\x+1,3+\y)--(\x+1,7+\y)--(\x+3,7+\y)--(\x+3,3+\y);
}

% 4th && 7th layers from bottom
\foreach \x in {24}
\foreach \y in {0} 
{
\draw [fill=gray!20] (\x+0,10+\y)--(\x+10,10+\y)--(\x+10,0+\y)--(\x+0,0+\y)--(\x+0,10+\y);
\draw [fill=white!20] (\x+3,3+\y)--(\x+1,3+\y)--(\x+1,7+\y)--(\x+3,7+\y)--(\x+3,3+\y);
\draw [fill=white!20] (\x+0,4+\y)--(\x+1,4+\y)--(\x+1,6+\y)--(\x+0,6+\y)--(\x+0,4+\y);
}

\foreach \x in {0,12,24}
\foreach \y in {0,...,10} 
{ 
\draw  (\x+0,0+\y)--(\x+10,0+\y);
\draw  (\x,0)--(\x,10);
\draw  (\x+10,0)--(\x+10,10);
\draw  (\x+9,0)--(\x+9,10);
\draw  (\x+8,0)--(\x+8,10);
\draw  (\x+7,0)--(\x+7,10);
\draw  (\x+6,0)--(\x+6,10);
\draw  (\x+5,0)--(\x+5,10);
\draw  (\x+4,0)--(\x+4,10);
\draw  (\x+3,0)--(\x+3,10);
\draw  (\x+2,0)--(\x+2,10);
\draw  (\x+1,0)--(\x+1,10);
}

\node at (5,-1) {$1$st, $2$nd, $3$rd layers};  \node at (17,-1) {$4$th and $7$th layers};  \node at (29,-1) {$5$th and $6$th layers};  
\node at (5,-2) {$8$th, $9$th, $10$th layers}; 

\end{tikzpicture}
\end{center}
\caption{Level-1 layer diagram of $x$.}\label{fig_3d_x}
\end{figure}

%%%  a functional cube $X$  (for alignment of LINKER in x direction)

\begin{figure}[H]
\begin{center}
\begin{tikzpicture}[scale=0.4]

\foreach \x in {12}
\foreach \y in {0} 
{

\draw [fill=gray!20] (\x+3,3+\y)--(\x+1,3+\y)--(\x+1,7+\y)--(\x+3,7+\y)--(\x+3,3+\y);
}

% 4th && 7th layers from bottom
\foreach \x in {24}
\foreach \y in {0} 
{

\draw [fill=gray!20] (\x+3,3+\y)--(\x+1,3+\y)--(\x+1,7+\y)--(\x+3,7+\y)--(\x+3,3+\y);
\draw [fill=gray!20] (\x+0,4+\y)--(\x+1,4+\y)--(\x+1,6+\y)--(\x+0,6+\y)--(\x+0,4+\y);
}

\foreach \x in {0,12,24}
\foreach \y in {0,...,10} 
{ 
\draw  (\x+0,0+\y)--(\x+10,0+\y);
\draw  (\x,0)--(\x,10);
\draw  (\x+10,0)--(\x+10,10);
\draw  (\x+9,0)--(\x+9,10);
\draw  (\x+8,0)--(\x+8,10);
\draw  (\x+7,0)--(\x+7,10);
\draw  (\x+6,0)--(\x+6,10);
\draw  (\x+5,0)--(\x+5,10);
\draw  (\x+4,0)--(\x+4,10);
\draw  (\x+3,0)--(\x+3,10);
\draw  (\x+2,0)--(\x+2,10);
\draw  (\x+1,0)--(\x+1,10);
}

\node at (5,-1) {$1$st, $2$nd, $3$rd layers};  \node at (17,-1) {$4$th and $7$th layers};  \node at (29,-1) {$5$th and $6$th layers};  
\node at (5,-2) {$8$th, $9$th, $10$th layers};

\end{tikzpicture}
\end{center}
\caption{Level-1 layer diagram of $X$.}\label{fig_3d_X}
\end{figure}

The fourth and fifth pairs of building blocks, $y_0$ and $Y_0$, and $y_1$ and $Y_1$ are illustrated in Figure \ref{fig_3d_y0},  Figure \ref{fig_3d_Y0}, Figure \ref{fig_3d_y1}, and Figure \ref{fig_3d_Y1}, respectively. They are also used for the purpose of vertical alignment of linkers, but they are used for two adjacent linkers in the south-north direction. 

%%%  a functional cube $y0$  (for alignment of LINKER in y direction)

\begin{figure}[H]
\begin{center}
\begin{tikzpicture}[scale=0.4]

% 1st 2nd 3rd 8th 8th 10th layers from bottom  
\foreach \x in {0}
\foreach \y in {0} 
{
\draw [fill=gray!20] (\x+0,10+\y)--(\x+10,10+\y)--(\x+10,0+\y)--(\x+0,0+\y)--(\x+0,10+\y);
}

\foreach \x in {12}
\foreach \y in {0} 
{
\draw [fill=gray!20] (\x+0,10+\y)--(\x+10,10+\y)--(\x+10,0+\y)--(\x+0,0+\y)--(\x+0,10+\y);
\draw [fill=white!20] (\x+3,3+\y)--(\x+7,3+\y)--(\x+7,7+\y)--(\x+3,7+\y)--(\x+3,3+\y);
}

\foreach \x in {24}
\foreach \y in {0} 
{
\draw [fill=gray!20] (\x+0,10+\y)--(\x+10,10+\y)--(\x+10,0+\y)--(\x+0,0+\y)--(\x+0,10+\y);
\draw [fill=white!20] (\x+3,3+\y)--(\x+7,3+\y)--(\x+7,7+\y)--(\x+3,7+\y)--(\x+3,3+\y);
\draw [fill=white!20] (\x+4,3+\y)--(\x+6,3+\y)--(\x+6,0+\y)--(\x+4,0+\y)--(\x+4,3+\y);
}

\foreach \x in {0,12,24}
\foreach \y in {0,...,10} 
{ 
\draw  (\x+0,0+\y)--(\x+10,0+\y);
\draw  (\x,0)--(\x,10);
\draw  (\x+10,0)--(\x+10,10);
\draw  (\x+9,0)--(\x+9,10);
\draw  (\x+8,0)--(\x+8,10);
\draw  (\x+7,0)--(\x+7,10);
\draw  (\x+6,0)--(\x+6,10);
\draw  (\x+5,0)--(\x+5,10);
\draw  (\x+4,0)--(\x+4,10);
\draw  (\x+3,0)--(\x+3,10);
\draw  (\x+2,0)--(\x+2,10);
\draw  (\x+1,0)--(\x+1,10);
}

\node at (5,-1) {$1$st, $2$nd, $3$rd layers};  \node at (17,-1) {$4$th and $7$th layers};  \node at (29,-1) {$5$th and $6$th layers}; 
\node at (5,-2) {$8$th, $9$th, $10$th layers}; 

\end{tikzpicture}
\end{center}
\caption{Level-1 layer diagram of $y_0$.}\label{fig_3d_y0}
\end{figure}

%%%  a functional cube $Y0$  (for alignment of LINKER in y direction)

\begin{figure}[H]
\begin{center}
\begin{tikzpicture}[scale=0.4]

\foreach \x in {12}
\foreach \y in {0} 
{

\draw [fill=gray!20] (\x+3,3+\y)--(\x+7,3+\y)--(\x+7,7+\y)--(\x+3,7+\y)--(\x+3,3+\y);
}

\foreach \x in {24}
\foreach \y in {0} 
{

\draw [fill=gray!20] (\x+3,3+\y)--(\x+7,3+\y)--(\x+7,7+\y)--(\x+3,7+\y)--(\x+3,3+\y);
\draw [fill=gray!20] (\x+4,3+\y)--(\x+6,3+\y)--(\x+6,0+\y)--(\x+4,0+\y)--(\x+4,3+\y);
}

\foreach \x in {0,12,24}
\foreach \y in {0,...,10} 
{ 
\draw  (\x+0,0+\y)--(\x+10,0+\y);
\draw  (\x,0)--(\x,10);
\draw  (\x+10,0)--(\x+10,10);
\draw  (\x+9,0)--(\x+9,10);
\draw  (\x+8,0)--(\x+8,10);
\draw  (\x+7,0)--(\x+7,10);
\draw  (\x+6,0)--(\x+6,10);
\draw  (\x+5,0)--(\x+5,10);
\draw  (\x+4,0)--(\x+4,10);
\draw  (\x+3,0)--(\x+3,10);
\draw  (\x+2,0)--(\x+2,10);
\draw  (\x+1,0)--(\x+1,10);
}

\node at (5,-1) {$1$st, $2$nd, $3$rd layers};  \node at (17,-1) {$4$th and $7$th layers};  \node at (29,-1) {$5$th and $6$th layers};  
\node at (5,-2) {$8$th, $9$th, $10$th layers};

\end{tikzpicture}
\end{center}
\caption{Level-1 layer diagram of $Y_0$.}\label{fig_3d_Y0}
\end{figure}

%%%  a functional cube $y1$  (for alignment of LINKER in y direction)

\begin{figure}[H]
\begin{center}
\begin{tikzpicture}[scale=0.4]

% 1st 2nd 3rd 8th 8th 10th layers from bottom  
\foreach \x in {0}
\foreach \y in {0} 
{
\draw [fill=gray!20] (\x+0,10+\y)--(\x+10,10+\y)--(\x+10,0+\y)--(\x+0,0+\y)--(\x+0,10+\y);
}

\foreach \x in {12}
\foreach \y in {0} 
{
\draw [fill=gray!20] (\x+0,10+\y)--(\x+10,10+\y)--(\x+10,0+\y)--(\x+0,0+\y)--(\x+0,10+\y);
\draw [fill=white!20] (\x+2,4+\y)--(\x+8,4+\y)--(\x+8,6+\y)--(\x+2,6+\y)--(\x+2,4+\y);
}

\foreach \x in {24}
\foreach \y in {0} 
{
\draw [fill=gray!20] (\x+0,10+\y)--(\x+10,10+\y)--(\x+10,0+\y)--(\x+0,0+\y)--(\x+0,10+\y);
\draw [fill=white!20] (\x+2,4+\y)--(\x+8,4+\y)--(\x+8,6+\y)--(\x+2,6+\y)--(\x+2,4+\y);
\draw [fill=white!20] (\x+4,4+\y)--(\x+6,4+\y)--(\x+6,0+\y)--(\x+4,0+\y)--(\x+4,4+\y);
}

\foreach \x in {0,12,24}
\foreach \y in {0,...,10} 
{ 
\draw  (\x+0,0+\y)--(\x+10,0+\y);
\draw  (\x,0)--(\x,10);
\draw  (\x+10,0)--(\x+10,10);
\draw  (\x+9,0)--(\x+9,10);
\draw  (\x+8,0)--(\x+8,10);
\draw  (\x+7,0)--(\x+7,10);
\draw  (\x+6,0)--(\x+6,10);
\draw  (\x+5,0)--(\x+5,10);
\draw  (\x+4,0)--(\x+4,10);
\draw  (\x+3,0)--(\x+3,10);
\draw  (\x+2,0)--(\x+2,10);
\draw  (\x+1,0)--(\x+1,10);
}

\node at (5,-1) {$1$st, $2$nd, $3$rd layers};  \node at (17,-1) {$4$th and $7$th layers};  \node at (29,-1) {$5$th and $6$th layers}; 
\node at (5,-2) {$8$th, $9$th, $10$th layers}; 

\end{tikzpicture}
\end{center}
\caption{Level-1 layer diagram of $y_1$.}\label{fig_3d_y1}
\end{figure}

%%%  a functional cube $Y1$  (for alignment of LINKER in y direction)

\begin{figure}[H]
\begin{center}
\begin{tikzpicture}[scale=0.4]

\foreach \x in {12}
\foreach \y in {0} 
{

\draw [fill=gray!20] (\x+2,4+\y)--(\x+8,4+\y)--(\x+8,6+\y)--(\x+2,6+\y)--(\x+2,4+\y);
}

\foreach \x in {24}
\foreach \y in {0} 
{

\draw [fill=gray!20] (\x+2,4+\y)--(\x+8,4+\y)--(\x+8,6+\y)--(\x+2,6+\y)--(\x+2,4+\y);
\draw [fill=gray!20] (\x+4,4+\y)--(\x+6,4+\y)--(\x+6,0+\y)--(\x+4,0+\y)--(\x+4,4+\y);
}

\foreach \x in {0,12,24}
\foreach \y in {0,...,10} 
{ 
\draw  (\x+0,0+\y)--(\x+10,0+\y);
\draw  (\x,0)--(\x,10);
\draw  (\x+10,0)--(\x+10,10);
\draw  (\x+9,0)--(\x+9,10);
\draw  (\x+8,0)--(\x+8,10);
\draw  (\x+7,0)--(\x+7,10);
\draw  (\x+6,0)--(\x+6,10);
\draw  (\x+5,0)--(\x+5,10);
\draw  (\x+4,0)--(\x+4,10);
\draw  (\x+3,0)--(\x+3,10);
\draw  (\x+2,0)--(\x+2,10);
\draw  (\x+1,0)--(\x+1,10);
}

\node at (5,-1) {$1$st, $2$nd, $3$rd layers};  \node at (17,-1) {$4$th and $7$th layers};  \node at (29,-1) {$5$th and $6$th layers};  
\node at (5,-2) {$8$th, $9$th, $10$th layers};

\end{tikzpicture}
\end{center}
\caption{Level-1 layer diagram of $Y_1$.}\label{fig_3d_Y1}
\end{figure}
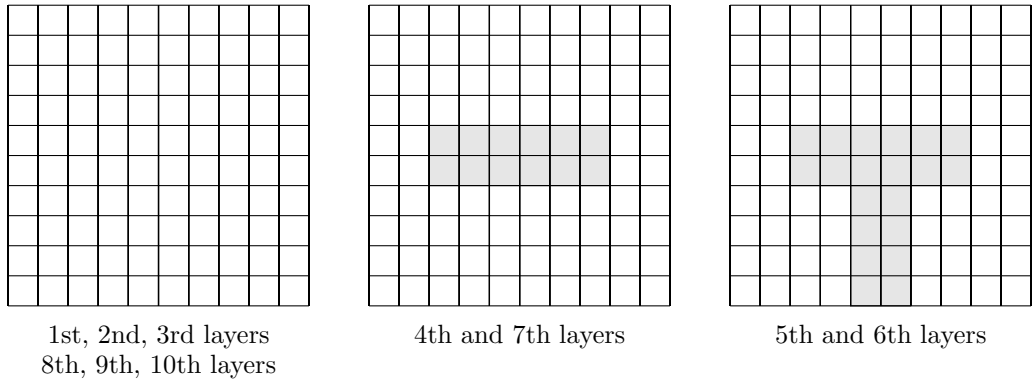

In addition to the previous two pairs, the sixth pair of building blocks, $t$ and $T$ (illustrated in Figure \ref{fig_3d_t} and Figure \ref{fig_3d_T}, respectively), also participates in ensuring vertical alignment of the linkers. The two building blocks $t$ and $T$ also help to ensure that the linkers and encoders (the encoder is a tile that will be introduced in the next subsection) must be interlocked with each other.

%%%  a functional cube $t$  (for alignment of LINKER in y direction)  tunnels for Y/y

\begin{figure}[H]
\begin{center}
\begin{tikzpicture}[scale=0.4]

% 1st 2nd 3rd 8th 9th 10th layers from bottom  
\foreach \x in {0}
\foreach \y in {0} 
{
\draw [fill=gray!20] (\x+0,10+\y)--(\x+10,10+\y)--(\x+10,0+\y)--(\x+0,0+\y)--(\x+0,10+\y);
}

\foreach \x in {12}
\foreach \y in {0} 
{
\draw [fill=gray!20] (\x+0,10+\y)--(\x+4,10+\y)--(\x+4,0+\y)--(\x+0,0+\y)--(\x+0,10+\y);
\draw [fill=gray!20] (\x+10,10+\y)--(\x+6,10+\y)--(\x+6,0+\y)--(\x+10,0+\y)--(\x+10,10+\y);

\draw [fill=white] (\x+4,7+\y)--(\x+2,7+\y)--(\x+2,3+\y)--(\x+4,3+\y)--(\x+4,7+\y);
\draw [fill=white] (\x+6,7+\y)--(\x+8,7+\y)--(\x+8,3+\y)--(\x+6,3+\y)--(\x+6,7+\y);
\draw [fill=gray!20] (\x+4,6+\y)--(\x+3,6+\y)--(\x+3,4+\y)--(\x+4,4+\y)--(\x+4,6+\y);
\draw [fill=gray!20] (\x+6,6+\y)--(\x+7,6+\y)--(\x+7,4+\y)--(\x+6,4+\y)--(\x+6,6+\y);

}

\foreach \x in {0,12}
\foreach \y in {0,...,10} 
{ 
\draw  (\x+0,0+\y)--(\x+10,0+\y);
\draw  (\x,0)--(\x,10);
\draw  (\x+10,0)--(\x+10,10);
\draw  (\x+9,0)--(\x+9,10);
\draw  (\x+8,0)--(\x+8,10);
\draw  (\x+7,0)--(\x+7,10);
\draw  (\x+6,0)--(\x+6,10);
\draw  (\x+5,0)--(\x+5,10);
\draw  (\x+4,0)--(\x+4,10);
\draw  (\x+3,0)--(\x+3,10);
\draw  (\x+2,0)--(\x+2,10);
\draw  (\x+1,0)--(\x+1,10);
}

\node at (5,-1) {$1$st to $4$th layers};  \node at (17,-1) {$5$th and $6$th layers};  
\node at (5,-2) {and $7$th to $10$th layers}; 

\end{tikzpicture}
\end{center}
\caption{Level-1 layer diagram of $t$.}\label{fig_3d_t}
\end{figure}

%%%  a functional cube $T$  (for alignment of LINKER in y direction) tunnels for Y

\begin{figure}[H]
\begin{center}
\begin{tikzpicture}[scale=0.4]

\foreach \x in {12}
\foreach \y in {0} 
{

\draw [fill=gray!20] (\x+6,10+\y)--(\x+4,10+\y)--(\x+4,0+\y)--(\x+6,0+\y)--(\x+6,10+\y);

\draw [fill=gray!20] (\x+4,7+\y)--(\x+2,7+\y)--(\x+2,3+\y)--(\x+4,3+\y)--(\x+4,7+\y);
\draw [fill=gray!20] (\x+6,7+\y)--(\x+8,7+\y)--(\x+8,3+\y)--(\x+6,3+\y)--(\x+6,7+\y);
\draw [fill=white] (\x+4,6+\y)--(\x+3,6+\y)--(\x+3,4+\y)--(\x+4,4+\y)--(\x+4,6+\y);
\draw [fill=white] (\x+6,6+\y)--(\x+7,6+\y)--(\x+7,4+\y)--(\x+6,4+\y)--(\x+6,6+\y);

}

\foreach \x in {0,12}
\foreach \y in {0,...,10} 
{ 
\draw  (\x+0,0+\y)--(\x+10,0+\y);
\draw  (\x,0)--(\x,10);
\draw  (\x+10,0)--(\x+10,10);
\draw  (\x+9,0)--(\x+9,10);
\draw  (\x+8,0)--(\x+8,10);
\draw  (\x+7,0)--(\x+7,10);
\draw  (\x+6,0)--(\x+6,10);
\draw  (\x+5,0)--(\x+5,10);
\draw  (\x+4,0)--(\x+4,10);
\draw  (\x+3,0)--(\x+3,10);
\draw  (\x+2,0)--(\x+2,10);
\draw  (\x+1,0)--(\x+1,10);
}

\node at (5,-1) {$1$st to $4$th layers};  \node at (17,-1) {$5$th and $6$th layers};  
\node at (5,-2) {and $7$th to $10$th layers};

\end{tikzpicture}
\end{center}
\caption{Level-1 layer diagram of $T$.}\label{fig_3d_T}
\end{figure}

The last two pairs of building blocks, $e$ and $\mathbb{E}$, and $l$ and $\mathbb{L}$ (illustrated in Figure \ref{fig_3d_e}, Figure \ref{fig_3d_E}, Figure \ref{fig_3d_l} and Figure \ref{fig_3d_L}, respectively), are used to ensure that an encoder must be placed on top of another encoder and a linker must be placed on top of another linker, respectively.

%%%  a functional cube $e$  (for  ENCODER stack)

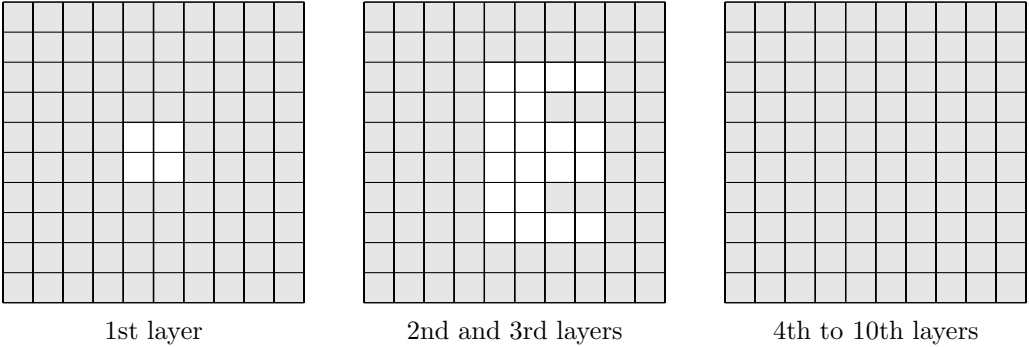
\begin{figure}[H]
\begin{center}
\begin{tikzpicture}[scale=0.4]

% 1st 2nd 3rd 8th 8th 10th layers from bottom  
\foreach \x in {24}
\foreach \y in {0} 
{
\draw [fill=gray!20] (\x+0,10+\y)--(\x+10,10+\y)--(\x+10,0+\y)--(\x+0,0+\y)--(\x+0,10+\y);
}

% 4th && 7th layers from bottom
\foreach \x in {0}
\foreach \y in {0} 
{
\draw [fill=gray!20] (\x+0,10+\y)--(\x+10,10+\y)--(\x+10,0+\y)--(\x+0,0+\y)--(\x+0,10+\y);
\draw [fill=white!20] (\x+6,4+\y)--(\x+4,4+\y)--(\x+4,6+\y)--(\x+6,6+\y)--(\x+6,4+\y);
}

% 5-6th layers from bottom
\foreach \x in {12}
\foreach \y in {0} 
{
\draw [fill=gray!20] (\x+0,10+\y)--(\x+10,10+\y)--(\x+10,0+\y)--(\x+0,0+\y)--(\x+0,10+\y);
\draw [fill=white!20] (\x+8,4+\y)--(\x+6,4+\y)--(\x+6,6+\y)--(\x+8,6+\y)--(\x+8,4+\y);
\draw [fill=white!20] (\x+8,7+\y)--(\x+6,7+\y)--(\x+6,8+\y)--(\x+8,8+\y)--(\x+8,7+\y);
\draw [fill=white!20] (\x+8,2+\y)--(\x+6,2+\y)--(\x+6,3+\y)--(\x+8,3+\y)--(\x+8,2+\y);
\draw [fill=white!20] (\x+6,2+\y)--(\x+4,2+\y)--(\x+4,8+\y)--(\x+6,8+\y)--(\x+6,2+\y);
}

\foreach \x in {0,12,24}
\foreach \y in {0,...,10} 
{ 
\draw  (\x+0,0+\y)--(\x+10,0+\y);
\draw  (\x,0)--(\x,10);
\draw  (\x+10,0)--(\x+10,10);
\draw  (\x+9,0)--(\x+9,10);
\draw  (\x+8,0)--(\x+8,10);
\draw  (\x+7,0)--(\x+7,10);
\draw  (\x+6,0)--(\x+6,10);
\draw  (\x+5,0)--(\x+5,10);
\draw  (\x+4,0)--(\x+4,10);
\draw  (\x+3,0)--(\x+3,10);
\draw  (\x+2,0)--(\x+2,10);
\draw  (\x+1,0)--(\x+1,10);
}

\node at (5,-1) {$1$st layer};  \node at (17,-1) {$2$nd and $3$rd layers};  \node at (29,-1) {$4$th to $10$th layers};

\end{tikzpicture}
\end{center}
\caption{Level-1 layer diagram of $e$.}\label{fig_3d_e}
\end{figure}

%%%  a functional cube $E$  (for  ENCODER  stack)

\begin{figure}[H]
\begin{center}
\begin{tikzpicture}[scale=0.4]

% 4th && 7th layers from bottom
\foreach \x in {0}
\foreach \y in {0} 
{

\draw [fill=gray!20] (\x+6,4+\y)--(\x+4,4+\y)--(\x+4,6+\y)--(\x+6,6+\y)--(\x+6,4+\y);
}

% 5-6th layers from bottom
\foreach \x in {12}
\foreach \y in {0} 
{

\draw [fill=gray!20] (\x+8,4+\y)--(\x+6,4+\y)--(\x+6,6+\y)--(\x+8,6+\y)--(\x+8,4+\y);
\draw [fill=gray!20] (\x+8,7+\y)--(\x+6,7+\y)--(\x+6,8+\y)--(\x+8,8+\y)--(\x+8,7+\y);
\draw [fill=gray!20] (\x+8,2+\y)--(\x+6,2+\y)--(\x+6,3+\y)--(\x+8,3+\y)--(\x+8,2+\y);
\draw [fill=gray!20] (\x+6,2+\y)--(\x+4,2+\y)--(\x+4,8+\y)--(\x+6,8+\y)--(\x+6,2+\y);
}

\foreach \x in {0,12,24}
\foreach \y in {0,...,10} 
{ 
\draw  (\x+0,0+\y)--(\x+10,0+\y);
\draw  (\x,0)--(\x,10);
\draw  (\x+10,0)--(\x+10,10);
\draw  (\x+9,0)--(\x+9,10);
\draw  (\x+8,0)--(\x+8,10);
\draw  (\x+7,0)--(\x+7,10);
\draw  (\x+6,0)--(\x+6,10);
\draw  (\x+5,0)--(\x+5,10);
\draw  (\x+4,0)--(\x+4,10);
\draw  (\x+3,0)--(\x+3,10);
\draw  (\x+2,0)--(\x+2,10);
\draw  (\x+1,0)--(\x+1,10);
}

\node at (5,-1) {$1$st layer};  \node at (17,-1) {$2$nd and $3$rd layers};  \node at (29,-1) {$4$th to $10$th layers};

\end{tikzpicture}
\end{center}
\caption{Level-1 layer diagram of $\mathbb{E}$.}\label{fig_3d_E}
\end{figure}

%%%  a functional cube $l$  (for  LINKER stack)

\begin{figure}[H]
\begin{center}
\begin{tikzpicture}[scale=0.4]

% 1st 2nd 3rd 8th 8th 10th layers from bottom  
\foreach \x in {24}
\foreach \y in {0} 
{
\draw [fill=gray!20] (\x+0,10+\y)--(\x+10,10+\y)--(\x+10,0+\y)--(\x+0,0+\y)--(\x+0,10+\y);
}

% 4th && 7th layers from bottom
\foreach \x in {0}
\foreach \y in {0} 
{
\draw [fill=gray!20] (\x+0,10+\y)--(\x+10,10+\y)--(\x+10,0+\y)--(\x+0,0+\y)--(\x+0,10+\y);
\draw [fill=white!20] (\x+6,4+\y)--(\x+4,4+\y)--(\x+4,6+\y)--(\x+6,6+\y)--(\x+6,4+\y);
}

% 5-6th layers from bottom
\foreach \x in {12}
\foreach \y in {0} 
{
\draw [fill=gray!20] (\x+0,10+\y)--(\x+10,10+\y)--(\x+10,0+\y)--(\x+0,0+\y)--(\x+0,10+\y);

\draw [fill=white!20] (\x+8,2+\y)--(\x+6,2+\y)--(\x+6,4+\y)--(\x+8,4+\y)--(\x+8,2+\y);
\draw [fill=white!20] (\x+6,2+\y)--(\x+4,2+\y)--(\x+4,8+\y)--(\x+6,8+\y)--(\x+6,2+\y);
}

\foreach \x in {0,12,24}
\foreach \y in {0,...,10} 
{ 
\draw  (\x+0,0+\y)--(\x+10,0+\y);
\draw  (\x,0)--(\x,10);
\draw  (\x+10,0)--(\x+10,10);
\draw  (\x+9,0)--(\x+9,10);
\draw  (\x+8,0)--(\x+8,10);
\draw  (\x+7,0)--(\x+7,10);
\draw  (\x+6,0)--(\x+6,10);
\draw  (\x+5,0)--(\x+5,10);
\draw  (\x+4,0)--(\x+4,10);
\draw  (\x+3,0)--(\x+3,10);
\draw  (\x+2,0)--(\x+2,10);
\draw  (\x+1,0)--(\x+1,10);
}

\node at (5,-1) {$1$st layer};  \node at (17,-1) {$2$nd and $3$rd layers};  \node at (29,-1) {$4$th to $10$th layers};

\end{tikzpicture}
\end{center}
\caption{Level-1 layer diagram of $l$.}\label{fig_3d_l}
\end{figure}

%%%  a functional cube $L$  (for  LINKER  stack)

\begin{figure}[H]
\begin{center}
\begin{tikzpicture}[scale=0.4]

% 4th && 7th layers from bottom
\foreach \x in {0}
\foreach \y in {0} 
{

\draw [fill=gray!20] (\x+6,4+\y)--(\x+4,4+\y)--(\x+4,6+\y)--(\x+6,6+\y)--(\x+6,4+\y);
}

% 5-6th layers from bottom
\foreach \x in {12}
\foreach \y in {0} 
{

\draw [fill=gray!20] (\x+8,2+\y)--(\x+6,2+\y)--(\x+6,4+\y)--(\x+8,4+\y)--(\x+8,2+\y);
\draw [fill=gray!20] (\x+6,2+\y)--(\x+4,2+\y)--(\x+4,8+\y)--(\x+6,8+\y)--(\x+6,2+\y);
}

\foreach \x in {0,12,24}
\foreach \y in {0,...,10} 
{ 
\draw  (\x+0,0+\y)--(\x+10,0+\y);
\draw  (\x,0)--(\x,10);
\draw  (\x+10,0)--(\x+10,10);
\draw  (\x+9,0)--(\x+9,10);
\draw  (\x+8,0)--(\x+8,10);
\draw  (\x+7,0)--(\x+7,10);
\draw  (\x+6,0)--(\x+6,10);
\draw  (\x+5,0)--(\x+5,10);
\draw  (\x+4,0)--(\x+4,10);
\draw  (\x+3,0)--(\x+3,10);
\draw  (\x+2,0)--(\x+2,10);
\draw  (\x+1,0)--(\x+1,10);
}

\node at (5,-1) {$1$st layer};  \node at (17,-1) {$2$nd and $3$rd layers};  \node at (29,-1) {$4$th to $10$th layers};

\end{tikzpicture}
\end{center}
\caption{Level-1 layer diagram of $\mathbb{L}$.}\label{fig_3d_L}
\end{figure}
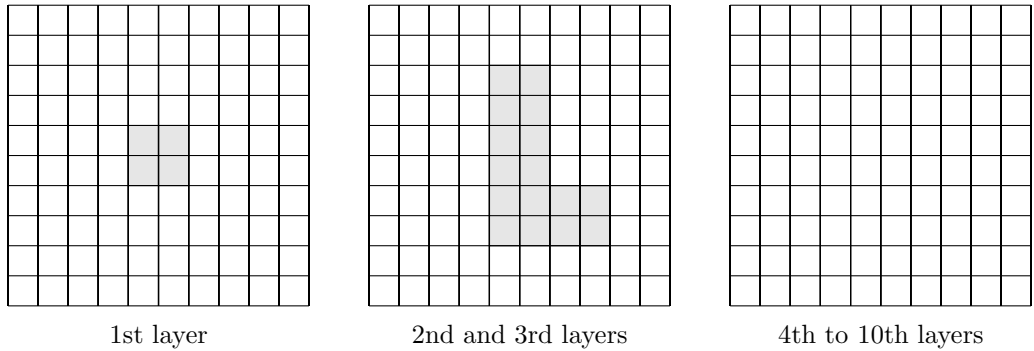

It is easy to check that all the building blocks are connected except $u$. As we mentioned earlier, the building blocks $u$ will be connected through their neighbors. So all the tiles constructed in the next subsection are connected.

We make several remarks to conclude this subsection. Each square of the layer diagrams in the figures in this subsection represents a $1\times 1\times 1$ unit cube, so they are called \textit{level}-1 layer diagram, which is to be distinguished from the \textit{level}-2 diagram in the next subsection, where a square represents a building block. Note also that the size and shape of the building blocks we have defined in this subsection are kind of arbitrary. They can be replaced by building blocks of other sizes and shapes as long as each pair is a perfect match to each other (forming a $10\times 10\times 10$ functional cube), but different pairs cannot be matched. Also, we do not try to minimize the size of the building blocks in our proof of Theorem \ref{thm_3d_new}. Our proof of Theorem~\ref{thm_3d_new} can likely be revised to use building blocks based on polycubes of size $8\times 8\times 8$ or even smaller.

\subsection{The Set of Four Tiles}

Given a set of Wang tiles, we will construct a set of $4$ polycubes that will simulate the tilings of the set of Wang tiles. We take the set of $3$ Wang tiles in Figure \ref{fig_w3} as an example to describe the method of construction of the set of $4$ polycubes. The construction method can be easily generalized to any set of Wang tiles. 

There are $4$ different colors in the set of Wang tiles in Figure \ref{fig_w3}. Let the four colors red, green, blue, and yellow be encoded by binary strings 00, 01, 10 and 11, respectively. We add two redundant bits to the strings, a prefix $0$ and a suffix $1$. Therefore, the complete binary codes for the four colors red, green, blue, and yellow are 0001, 0011, 0101 and 0111. So it is obvious that 0000 and 1111 are not used in our code words for the colors. In general, let $q$ be the number of different colors in a set of Wang tiles, then the colors can be encoded in the same way by using binary strings of length $t=\lceil \log_2 q\rceil + 2$. The binary strings for encoding the colors start with $0$ and end with $1$, and the colors are actually encoded by the $\lceil \log_2 q\rceil$ bits in the middle. This allows us to use the two binary strings: $t$ consecutive $0$s and $t$ consecutive $1$s for other purposes. Physically on a polycube, $0$ and $1$ will be represented by building blocks $d$ and $u$, respectively.

With the above encoding method in hand, we can introduce the \textit{encoder} tile as illustrated in Figure \ref{fig_3d_encoder}. As we have mentioned in the previous subsection, each square in the layer diagrams in this subsection represents a $10\times 10\times 10$ functional cube or other building blocks, so they are called \textit{level}-2 layer diagram. As depicted in Figure \ref{fig_3d_encoder}, the encoder is roughly (by ignoring the dents and bumps on some of the building blocks) a cuboid of $16\times 3\times 6$ building blocks. In other words, the main body of the encoder consists of $6$ layers (the seventh layer is regarded as a small bump that is attached to the main body) of building blocks, and each layer consists of a $16\times 3$ rectangle of building blocks.

The first layer, third layer and fifth layer are the encoding layers, and each layer simulates a Wang tile. The simulated Wang tile is depicted on the right of each layer for easier comparison. A Wang tile is simulated in an encoding layer in the following way. Both the north side and south side of an encoding layer are separated into four segments. Each segment consists of four consecutive building blocks. On the north side, the first segment is a sequence of four building blocks $uuuu$, the second and third segments encode the colors of two sides of a Wang tile using the encoding method we just introduced, and the last segment is $dddd$. On the south side, the first and last segments encode the colors of the other two sides of the same Wang tile, and the two central segments are $dddd$ and $uuuu$, respectively. The presence of the non-color-encoding segments ($dddd$ and $uuuu$) plays an important role in deciding the relative position between the encoders in a tiling as we will see in the next subsection.

%%%  ----  ENCODER  ------

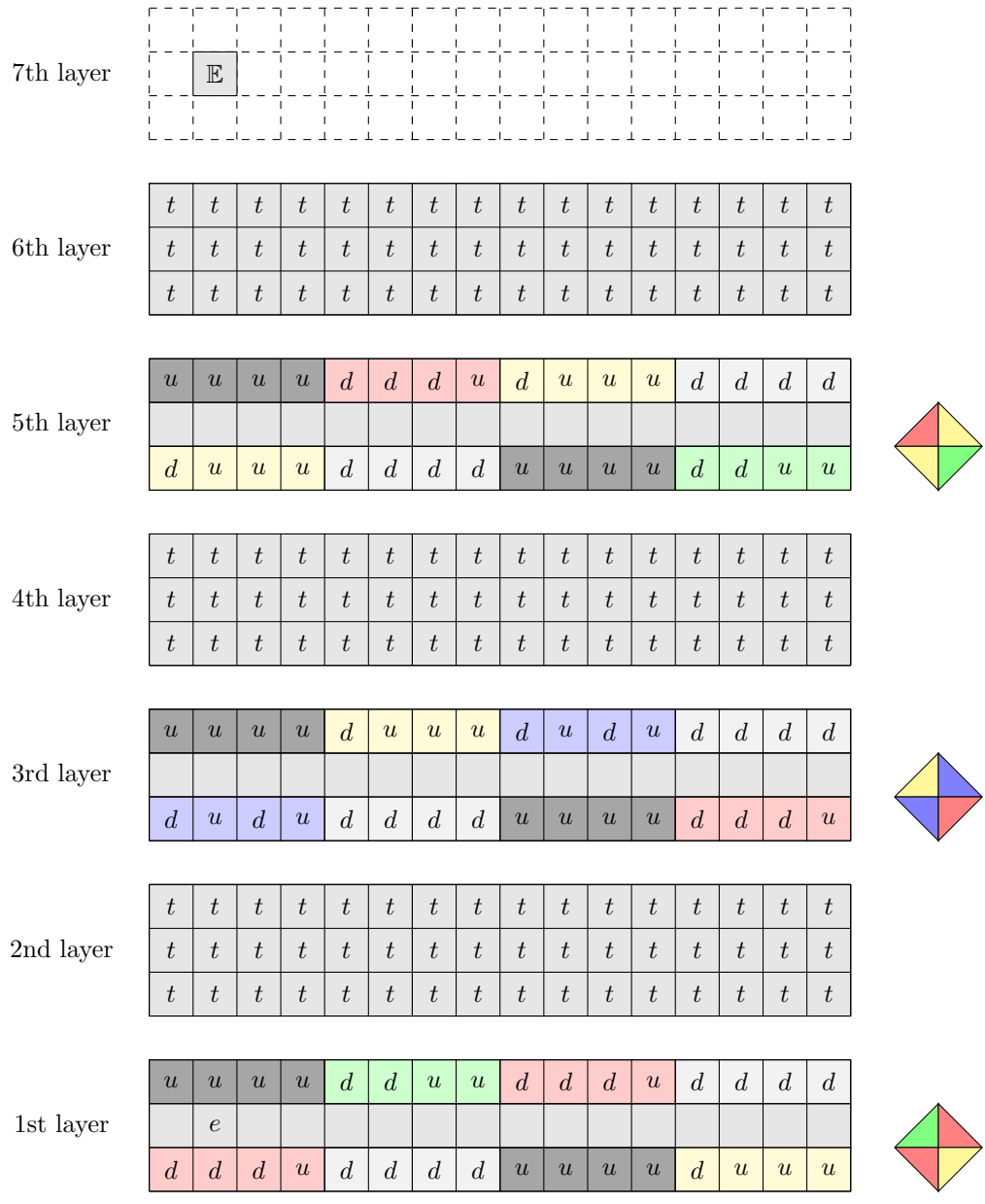
\begin{figure}[H]
\begin{center}
\begin{tikzpicture}[scale=0.6,pattern1/.style={draw=red,pattern color=gray!70, pattern=north east lines}]

%%% DARK 1111
\foreach \x in {0}
\foreach \y in {2,10,18} 
{
\draw [ fill=gray!70] (\x+0,\y+0)--(\x+4,\y+0)--(\x+4,\y+1)--(\x+0,\y+1)--(\x+0,\y+0);
\node at (\x+0.5,\y+0.5) {$u$};  
\node at (\x+1.5,\y+0.5) {$u$};  
\node at (\x+2.5,\y+0.5) {$u$};  
\node at (\x+3.5,\y+0.5) {$u$};  
}
\foreach \x in {8}
\foreach \y in {0,8,16} 
{
\draw [ fill=gray!70] (\x+0,\y+0)--(\x+4,\y+0)--(\x+4,\y+1)--(\x+0,\y+1)--(\x+0,\y+0);
\node at (\x+0.5,\y+0.5) {$u$};  
\node at (\x+1.5,\y+0.5) {$u$};  
\node at (\x+2.5,\y+0.5) {$u$};  
\node at (\x+3.5,\y+0.5) {$u$};  
}
%%% shaded 0000
\foreach \x in {4}
\foreach \y in {0,8,16} 
{
%\filldraw [ pattern1] (\x+0,\y+0)--(\x+4,\y+0)--(\x+4,\y+1)--(\x+0,\y+1)--(\x+0,\y+0);
\draw [ fill=gray!10 ] (\x+0,\y+0)--(\x+4,\y+0)--(\x+4,\y+1)--(\x+0,\y+1)--(\x+0,\y+0);
\node at (\x+0.5,\y+0.5) {$d$};  
\node at (\x+1.5,\y+0.5) {$d$};  
\node at (\x+2.5,\y+0.5) {$d$};  
\node at (\x+3.5,\y+0.5) {$d$};  
}
\foreach \x in {12}
\foreach \y in {2,10,18} 
{
%\filldraw [ pattern1] (\x+0,\y+0)--(\x+4,\y+0)--(\x+4,\y+1)--(\x+0,\y+1)--(\x+0,\y+0);
\draw [ fill=gray!10 ] (\x+0,\y+0)--(\x+4,\y+0)--(\x+4,\y+1)--(\x+0,\y+1)--(\x+0,\y+0);
\node at (\x+0.5,\y+0.5) {$d$};  
\node at (\x+1.5,\y+0.5) {$d$};  
\node at (\x+2.5,\y+0.5) {$d$};  
\node at (\x+3.5,\y+0.5) {$d$};  
}

%%%  NORMAL cubes
\foreach \x in {0}
\foreach \y in {1,9,17} 
{
\draw [ fill=gray!20] (\x+0,\y+0)--(\x+16,\y+0)--(\x+16,\y+1)--(\x+0,\y+1)--(\x+0,\y+0);
}
\foreach \x in {0}
\foreach \y in {4,12,20} 
{
\draw [ fill=gray!20] (\x+0,\y+0)--(\x+16,\y+0)--(\x+16,\y+3)--(\x+0,\y+3)--(\x+0,\y+0);
}

% RED

\foreach \x in {0}
\foreach \y in {0} 
{
\draw [ fill=red!20] (\x+0,\y+0)--(\x+4,\y+0)--(\x+4,\y+1)--(\x+0,\y+1)--(\x+0,\y+0);
\node at (\x+0.5,\y+0.5) {$d$};  
\node at (\x+1.5,\y+0.5) {$d$};  
\node at (\x+2.5,\y+0.5) {$d$};  
\node at (\x+3.5,\y+0.5) {$u$};  
}
\foreach \x in {8}
\foreach \y in {2} 
{
\draw [ fill=red!20] (\x+0,\y+0)--(\x+4,\y+0)--(\x+4,\y+1)--(\x+0,\y+1)--(\x+0,\y+0);
\node at (\x+0.5,\y+0.5) {$d$};  
\node at (\x+1.5,\y+0.5) {$d$};  
\node at (\x+2.5,\y+0.5) {$d$};  
\node at (\x+3.5,\y+0.5) {$u$};  
}

\foreach \x in {12}
\foreach \y in {8} 
{
\draw [ fill=red!20] (\x+0,\y+0)--(\x+4,\y+0)--(\x+4,\y+1)--(\x+0,\y+1)--(\x+0,\y+0);
\node at (\x+0.5,\y+0.5) {$d$};  
\node at (\x+1.5,\y+0.5) {$d$};  
\node at (\x+2.5,\y+0.5) {$d$};  
\node at (\x+3.5,\y+0.5) {$u$};  
}

\foreach \x in {4}
\foreach \y in {18} 
{
\draw [ fill=red!20] (\x+0,\y+0)--(\x+4,\y+0)--(\x+4,\y+1)--(\x+0,\y+1)--(\x+0,\y+0);
\node at (\x+0.5,\y+0.5) {$d$};  
\node at (\x+1.5,\y+0.5) {$d$};  
\node at (\x+2.5,\y+0.5) {$d$};  
\node at (\x+3.5,\y+0.5) {$u$};  
}

% YELLOW

\foreach \x in {12}
\foreach \y in {0} 
{
\draw [ fill=yellow!20] (\x+0,\y+0)--(\x+4,\y+0)--(\x+4,\y+1)--(\x+0,\y+1)--(\x+0,\y+0);
\node at (\x+0.5,\y+0.5) {$d$};  
\node at (\x+1.5,\y+0.5) {$u$};  
\node at (\x+2.5,\y+0.5) {$u$};  
\node at (\x+3.5,\y+0.5) {$u$};  
}
\foreach \x in {4}
\foreach \y in {10} 
{
\draw [ fill=yellow!20] (\x+0,\y+0)--(\x+4,\y+0)--(\x+4,\y+1)--(\x+0,\y+1)--(\x+0,\y+0);
\node at (\x+0.5,\y+0.5) {$d$};  
\node at (\x+1.5,\y+0.5) {$u$};  
\node at (\x+2.5,\y+0.5) {$u$};  
\node at (\x+3.5,\y+0.5) {$u$};  
}

\foreach \x in {0}
\foreach \y in {16} 
{
\draw [ fill=yellow!20] (\x+0,\y+0)--(\x+4,\y+0)--(\x+4,\y+1)--(\x+0,\y+1)--(\x+0,\y+0);
\node at (\x+0.5,\y+0.5) {$d$};  
\node at (\x+1.5,\y+0.5) {$u$};  
\node at (\x+2.5,\y+0.5) {$u$};  
\node at (\x+3.5,\y+0.5) {$u$};  
}

\foreach \x in {8}
\foreach \y in {18} 
{
\draw [ fill=yellow!20] (\x+0,\y+0)--(\x+4,\y+0)--(\x+4,\y+1)--(\x+0,\y+1)--(\x+0,\y+0);
\node at (\x+0.5,\y+0.5) {$d$};  
\node at (\x+1.5,\y+0.5) {$u$};  
\node at (\x+2.5,\y+0.5) {$u$};  
\node at (\x+3.5,\y+0.5) {$u$};  
}

% GREEN
\foreach \x in {4}
\foreach \y in {2} 
{
\draw [ fill=green!20] (\x+0,\y+0)--(\x+4,\y+0)--(\x+4,\y+1)--(\x+0,\y+1)--(\x+0,\y+0);
\node at (\x+0.5,\y+0.5) {$d$};  
\node at (\x+1.5,\y+0.5) {$d$};  
\node at (\x+2.5,\y+0.5) {$u$};  
\node at (\x+3.5,\y+0.5) {$u$};  
}

\foreach \x in {12}
\foreach \y in {16} 
{
\draw [ fill=green!20] (\x+0,\y+0)--(\x+4,\y+0)--(\x+4,\y+1)--(\x+0,\y+1)--(\x+0,\y+0);
\node at (\x+0.5,\y+0.5) {$d$};  
\node at (\x+1.5,\y+0.5) {$d$};  
\node at (\x+2.5,\y+0.5) {$u$};  
\node at (\x+3.5,\y+0.5) {$u$};  
}

%BLUE
\foreach \x in {8}
\foreach \y in {10} 
{
\draw [ fill=blue!20] (\x+0,\y+0)--(\x+4,\y+0)--(\x+4,\y+1)--(\x+0,\y+1)--(\x+0,\y+0);
\node at (\x+0.5,\y+0.5) {$d$};  
\node at (\x+1.5,\y+0.5) {$u$};  
\node at (\x+2.5,\y+0.5) {$d$};  
\node at (\x+3.5,\y+0.5) {$u$};  
}
\foreach \x in {0}
\foreach \y in {8} 
{
\draw [ fill=blue!20] (\x+0,\y+0)--(\x+4,\y+0)--(\x+4,\y+1)--(\x+0,\y+1)--(\x+0,\y+0);
\node at (\x+0.5,\y+0.5) {$d$};  
\node at (\x+1.5,\y+0.5) {$u$};  
\node at (\x+2.5,\y+0.5) {$d$};  
\node at (\x+3.5,\y+0.5) {$u$};  
}

%%%%% LAYERs

\foreach \x in {0}
\foreach \y in {0,...,23} 
{ 
\draw  (\x+0,0+\y)--(\x+16,0+\y);
}

\foreach \x in {0}
\foreach \y in {0,4,8,12,16,20} 
{ 
\draw  (\x,0+\y)--(\x,3+\y);
\draw  (\x+10,0+\y)--(\x+10,3+\y);
\draw  (\x+9,0+\y)--(\x+9,3+\y);
\draw  (\x+8,0+\y)--(\x+8,3+\y);
\draw  (\x+7,0+\y)--(\x+7,3+\y);
\draw  (\x+6,0+\y)--(\x+6,3+\y);
\draw  (\x+5,0+\y)--(\x+5,3+\y);
\draw  (\x+4,0+\y)--(\x+4,3+\y);
\draw  (\x+3,0+\y)--(\x+3,3+\y);
\draw  (\x+2,0+\y)--(\x+2,3+\y);
\draw  (\x+1,0+\y)--(\x+1,3+\y);
\draw  (\x+6,0+\y)--(\x+6,3+\y);
\draw  (\x+15,0+\y)--(\x+15,3+\y);
\draw  (\x+14,0+\y)--(\x+14,3+\y);
\draw  (\x+13,0+\y)--(\x+13,3+\y);
\draw  (\x+12,0+\y)--(\x+12,3+\y);
\draw  (\x+11,0+\y)--(\x+11,3+\y);
\draw  (\x+16,0+\y)--(\x+16,3+\y);
}

%dashed
\foreach \x in {0}
\foreach \y in {24} 
{ 
\draw [dashed]  (0,\y)--(16,\y);\draw [dashed]  (0,\y+3)--(16,\y+3);
\draw [dashed]  (0,\y+2)--(16,\y+2); \draw [dashed]  (0,\y+1)--(16,\y+1);

\draw [dashed]  (\x,0+\y)--(\x,3+\y);
\draw [dashed]  (\x+10,0+\y)--(\x+10,3+\y);
\draw [dashed]  (\x+9,0+\y)--(\x+9,3+\y);
\draw [dashed]  (\x+8,0+\y)--(\x+8,3+\y);
\draw [dashed]  (\x+7,0+\y)--(\x+7,3+\y);
\draw [dashed]  (\x+6,0+\y)--(\x+6,3+\y);
\draw [dashed]  (\x+5,0+\y)--(\x+5,3+\y);
\draw [dashed]  (\x+4,0+\y)--(\x+4,3+\y);
\draw [dashed]  (\x+3,0+\y)--(\x+3,3+\y);
\draw [dashed]  (\x+2,0+\y)--(\x+2,3+\y);
\draw [dashed]  (\x+1,0+\y)--(\x+1,3+\y);
\draw [dashed]  (\x+6,0+\y)--(\x+6,3+\y);
\draw [dashed]  (\x+15,0+\y)--(\x+15,3+\y);
\draw [dashed]  (\x+14,0+\y)--(\x+14,3+\y);
\draw [dashed]  (\x+13,0+\y)--(\x+13,3+\y);
\draw [dashed]  (\x+12,0+\y)--(\x+12,3+\y);
\draw [dashed]  (\x+11,0+\y)--(\x+11,3+\y);
\draw [dashed]  (\x+16,0+\y)--(\x+16,3+\y);
}

\node at (-2,1.5) {$1$st layer};  
\node at (-2,5.5) {$2$nd layer};  
\node at (-2,9.5) {$3$rd layer};  
\node at (-2,13.5) {$4$th layer};    
\node at (-2,17.5) {$5$th layer};  
\node at (-2,21.5) {$6$th layer};
\node at (-2,25.5) {$7$th layer};

\foreach \x in {17}
\foreach \y in {1} 
{ 
\draw [ fill=green!50] (\x+0,0+\y)--(\x+1,1+\y)--(\x+1,0+\y)--(\x+0,0+\y);
\draw [ fill=red!50] (\x+0,0+\y)--(\x+1,-1+\y)--(\x+1,0+\y)--(\x+0,0+\y);
\draw [ fill=red!50] (\x+2,0+\y)--(\x+1,1+\y)--(\x+1,0+\y)--(\x+2,0+\y);
\draw [ fill=yellow!50] (\x+2,0+\y)--(\x+1,-1+\y)--(\x+1,0+\y)--(\x+2,0+\y);
}

\foreach \x in {17}
\foreach \y in {9} 
{ 
\draw [ fill=yellow!50] (\x+0,0+\y)--(\x+1,1+\y)--(\x+1,0+\y)--(\x+0,0+\y);
\draw [ fill=blue!50] (\x+0,0+\y)--(\x+1,-1+\y)--(\x+1,0+\y)--(\x+0,0+\y);
\draw [ fill=blue!50] (\x+2,0+\y)--(\x+1,1+\y)--(\x+1,0+\y)--(\x+2,0+\y);
\draw [ fill=red!50] (\x+2,0+\y)--(\x+1,-1+\y)--(\x+1,0+\y)--(\x+2,0+\y);
}

\foreach \x in {17}
\foreach \y in {17} 
{ 
\draw [ fill=red!50] (\x+0,0+\y)--(\x+1,1+\y)--(\x+1,0+\y)--(\x+0,0+\y);
\draw [ fill=yellow!50] (\x+0,0+\y)--(\x+1,-1+\y)--(\x+1,0+\y)--(\x+0,0+\y);
\draw [ fill=yellow!50] (\x+2,0+\y)--(\x+1,1+\y)--(\x+1,0+\y)--(\x+2,0+\y);
\draw [ fill=green!50] (\x+2,0+\y)--(\x+1,-1+\y)--(\x+1,0+\y)--(\x+2,0+\y);
}

% block labels t
\foreach \x in {0,...,15}
\foreach \y in {4,5,6,12,13,14,20,21,22} 
{ 
\node at (\x+0.5,\y+0.5) {$t$};  
}

% 7th layer
\foreach \x in {1}
\foreach \y in {25} 
{
\draw [ fill=gray!20] (\x+0,\y+0)--(\x+1,\y+0)--(\x+1,\y+1)--(\x+0,\y+1)--(\x+0,\y+0);
\node at (\x+0.5,\y+0.5) {$\mathbb{E}$};  
}

\foreach \x in {1}
\foreach \y in {1} 
{
\node at (\x+0.5,\y+0.5) {$e$};  
}

\end{tikzpicture}
\end{center}
\caption{Level-2 layer diagram the encoder.}\label{fig_3d_encoder}
\end{figure}

The second, fourth and sixth layers are non-encoding layers. Each of these non-encoding layers is a $16\times 3$ rectangle of building blocks $t$. These non-encoding layers will help to ensure the tiling pattern as we will explain later in the next subsection. By the definition of building blocks $t$, we can see easily that each of these non-encoding layers contains $16$ parallel tunnels in the south-north direction.

In general, for a set of $p$ Wang tiles with $q$ colors (let $t=\lceil \log_2 q\rceil + 2$ as before), the corresponding encoder tile is (roughly) a cuboid of $4t\times 3\times 2p$ building blocks. The $(2i+1)$-th ($i=0,1,\cdots, p-1$) layers are called the \textit{encoding} layers, as each encoding layer encodes a Wang tile. All the other layers are \textit{non-encoding} layers. Therefore, the encoder tile has $4pt$ tunnels in the non-encoding layers. So the genus of the surface of the encoder tile increases as the number of Wang tiles and the number of different colors in the set of Wang tiles increases.

There are two more non-trivial building blocks on the encoder tile. One is the building block $e$ on the first layer, and the other is the building block $\mathbb{E}$ on the seventh layer. By the definition of the building block $\mathbb{E}$, it is easy to check that it is connected to the sixth layer of the encoder. Therefore, the encoder tile is a connected polycube. The building blocks $e$ and $\mathbb{E}$ ensure that we must place another encoder on top or below any encoder. We state this property of the encoders as the following fact for later reference.

\begin{Fact}\label{fct_column}
    In any tilings, the encoder tiles must form vertical two-way infinite arrays.
\end{Fact}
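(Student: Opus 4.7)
The strategy is to show that the $\mathbb{E}$-block at the top of any placed encoder forces a second encoder directly above, and, symmetrically, that the $e$-block at the bottom forces an encoder directly below; iterating in both directions yields a two-way infinite vertically aligned column.

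First I would focus on the single functional cube $C$ occupying the seventh layer of a placed encoder. The portion of $C$ outside the $\mathbb{E}$-bump has, by construction, exactly the shape of $e$. Invoking the design principle emphasized at the end of Subsection~2.1 (``each pair is a perfect match to each other, but different pairs cannot be matched''), this residual region cannot be assembled from building blocks drawn from other pairs; the only way to fill $C\setminus\mathbb{E}$ is by placing a single $e$-block. Since $e$ occurs only as the distinguished cell on the first layer of an encoder, the tile contributing this $e$ is a second encoder whose layer~$1$ aligns with layer~$7$ of the lower encoder. In particular the two encoders must share the same horizontal $(x,y)$-footprint, producing perfect vertical alignment. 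Iterating upward gives an infinite upward column of encoders.

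Next I would apply the symmetric analysis at the bottom of the encoder. The $e$-block on the first layer carries an internal cavity in its lowest three unit layers having exactly the shape of $\mathbb{E}$. The same non-interchangeability argument shows that this cavity can only be filled by a $\mathbb{E}$-block. Because $\mathbb{E}$ appears solely atop an encoder, there must be an encoder directly below the given one, again sharing the same horizontal footprint. Iterating downward closes the column into a two-way infinite vertical array, establishing the fact.

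The main obstacle is justifying the non-interchangeability assertion at the level of detail required here: ruling out that the shape $e$ could be tiled by some clever combination of smaller blocks drawn from the other pairs. I would verify this by direct inspection of the level-1 layer diagrams, checking for each other building block that either (i) it contains unit cubes lying outside the target shape $e$, hence cannot be placed inside $C\setminus\mathbb{E}$ at all, or (ii) its placement inside $C\setminus\mathbb{E}$ leaves a residual region that no remaining block can complete. In particular, the fine pattern in layers $2$ and $3$ distinguishing $e$ from $l$ prevents any linker component from substituting, while the center-only supports $U$, $D$ and the thin structures $t$, $T$, $x$, $X$, $y_i$, $Y_i$ cannot together cover the bulky outer shell of $e$ without leaving mismatched cells.
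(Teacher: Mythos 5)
Your proposal is correct and follows essentially the same approach that the paper takes: the paper establishes Fact~1 in a single sentence (``The building blocks $e$ and $\mathbb{E}$ ensure that we must place another encoder on top or below any encoder''), and you spell out the mechanism — the $\mathbb{E}$-bump on layer~$7$ leaves an $e$-shaped cavity that only the $e$-block (occurring only on an encoder) can fill, and symmetrically the $\mathbb{E}$-shaped cavity of the $e$-block on layer~$1$ can only be filled by the $\mathbb{E}$-bump of an encoder below; iterating gives the two-way infinite column. Your honest flagging of the non-interchangeability verification as the sensitive point is appropriate — the paper leans on the same design principle without carrying out the case analysis either, so your proof is at least as complete as the one implicit in the paper.
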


All other gray squares without labels in Figure \ref{fig_3d_encoder} represent trivial functional cubes without dents or bumps.

There are two \textit{linker} polycubes. One is illustrated in Figure \ref{fig_3d_linker}, which is called the $U$-linker. If we replace the two building blocks $U$ by $D$ in the first layer of the linker in Figure \ref{fig_3d_linker}, we get the other linker, $D$-linker. Like the encoder, the linkers also consist of $6$ layers of building blocks, with a bump that extends to the seventh layer. The building blocks in the $1\times 3$ red rectangle in Figure \ref{fig_3d_linker} are used to indicate how the layers are aligned. The building block with a red rectangle is on top of the building blocks in the red rectangle in the previous layer. So a linker is made up of a main body of $1\times 3\times 6$ cuboid of building blocks and some additional building blocks attached to the main body.

%%%%%   ----- LINKER  ------

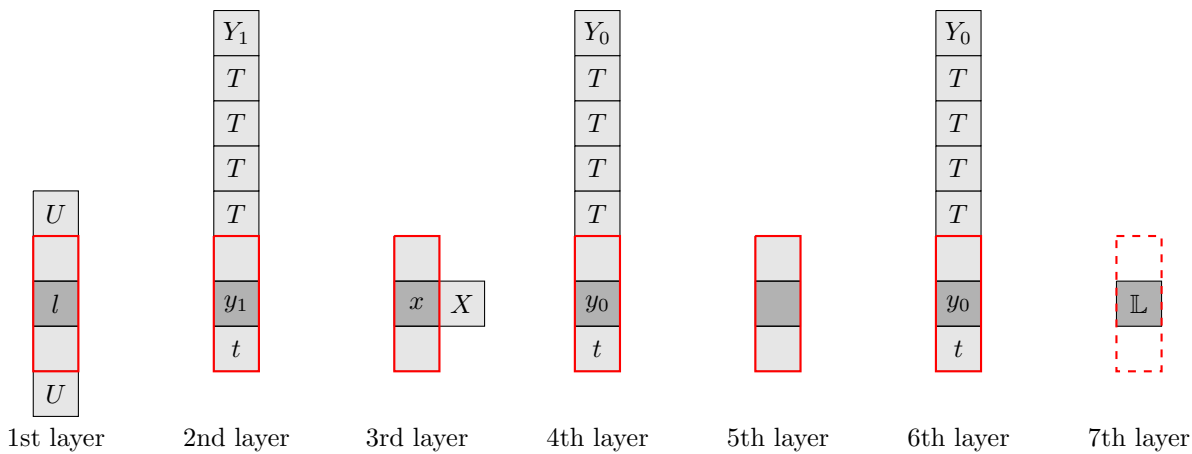
\begin{figure}[H]
\begin{center}
\begin{tikzpicture}[scale=0.6]

%%% DARK 1111
\foreach \x in {0,4, 8, 12, 16, 20}
\foreach \y in {0} 
{
\draw [ fill=gray!20] (\x+0,\y+0)--(\x+1,\y+0)--(\x+1,\y+3)--(\x+0,\y+3)--(\x+0,\y+0);

\draw [ fill=gray!60] (\x+0,\y+1)--(\x+1,\y+1)--(\x+1,\y+2)--(\x+0,\y+2)--(\x+0,\y+1);

\draw (\x+0,\y+1)--(\x+1,\y+1);
\draw (\x+0,\y+2)--(\x+1,\y+2);
}

%%% extending bumps
\foreach \x in {0}
\foreach \y in {0} 
{
\draw [ fill=gray!20] (\x+0,\y+0)--(\x+1,\y+0)--(\x+1,\y-1)--(\x+0,\y-1)--(\x+0,\y+0);
\draw [ fill=gray!20] (\x+0,\y+4)--(\x+1,\y+4)--(\x+1,\y+3)--(\x+0,\y+3)--(\x+0,\y+4);
}
%%% extending bumps part 2 (Y)
\foreach \x in {4,12,20}
\foreach \y in {0,1,2,3,4} 
{
\draw [ fill=gray!20] (\x+0,\y+4)--(\x+1,\y+4)--(\x+1,\y+3)--(\x+0,\y+3)--(\x+0,\y+4);
}

% X bumps
\foreach \x in {9}
\foreach \y in {1} 
{

\draw [ fill=gray!20] (\x+0,\y+1)--(\x+1,\y+1)--(\x+1,\y+0)--(\x+0,\y+0)--(\x+0,\y+1);
}

% L bumps
\foreach \x in {24}
\foreach \y in {1} 
{

\draw [ fill=gray!60] (\x+0,\y+1)--(\x+1,\y+1)--(\x+1,\y+0)--(\x+0,\y+0)--(\x+0,\y+1);
}

\node at (0.5,-1.5) {$1$st layer};  
\node at (4.5,-1.5) {$2$nd layer};  
\node at (8.5,-1.5) {$3$rd layer};  
\node at (12.5,-1.5) {$4$th layer};    
\node at (16.5,-1.5) {$5$th layer};  
\node at (20.5,-1.5) {$6$th layer};
\node at (24.5,-1.5) {$7$th layer};

% block labels U/D
\foreach \x in {0}
\foreach \y in {-1,3} 
{ 
\node at (\x+0.5,\y+0.5) {$U$};  
}
% block labels T
\foreach \x in {4,12,20}
\foreach \y in {0} 
{ 
\node at (\x+0.5,\y+0.5) {$t$};  
}

\foreach \x in {4,12,20}
\foreach \y in {3,4,5,6} 
{ 
\node at (\x+0.5,\y+0.5) {$T$};  
}

% block labels  Y direction
\foreach \x in {12,20}
\foreach \y in {1} 
{ 
\node at (\x+0.5,\y+0.5) {$y_0$};  
}

\foreach \x in {12,20}
\foreach \y in {7} 
{ 
\node at (\x+0.5,\y+0.5) {$Y_0$};  
}

% block labels  Y direction
\foreach \x in {4}
\foreach \y in {1} 
{ 
\node at (\x+0.5,\y+0.5) {$y_1$};  
}

\foreach \x in {4}
\foreach \y in {7} 
{ 
\node at (\x+0.5,\y+0.5) {$Y_1$};  
}

% block labels  X direction
\foreach \x in {8}
\foreach \y in {1} 
{ 
\node at (\x+0.5,\y+0.5) {$x$};  
}

\foreach \x in {9}
\foreach \y in {1} 
{ 
\node at (\x+0.5,\y+0.5) {$X$};  
}
% LINKER label
\foreach \x in {24}
\foreach \y in {1} 
{ 
\node at (\x+0.5,\y+0.5) {$\mathbb{L}$};  
}
\foreach \x in {0}
\foreach \y in {1} 
{ 
\node at (\x+0.5,\y+0.5) {$l$};  
}

%%% vertical align squares
\foreach \x in {0,4,8,12,16,20}
\foreach \y in {1} 
{
\draw [ thick, color=red] (\x+0,\y+2)--(\x+1,\y+2)--(\x+1,\y-1)--(\x+0,\y-1)--(\x+0,\y+2);
}

\foreach \x in {24}
\foreach \y in {1} 
{
\draw [ thick, dashed, color=red] (\x+0,\y+2)--(\x+1,\y+2)--(\x+1,\y-1)--(\x+0,\y-1)--(\x+0,\y+2);
}

\end{tikzpicture}
\end{center}
\caption{Level-2 layer diagram the $U$-linker.}\label{fig_3d_linker}
\end{figure}

In the first layer, a building block $U$ is attached to both the north and the south of the main body of the $U$-linker, and a building block $D$ is attached to both the north and the south of the main body of the $D$-linker. This is the only difference between the two linkers, all other building blocks are exactly the same for both the $U$-linker and $D$-linker.

In the second layer, an array of four building blocks $T$ and a building block $Y_1$ is attached to the north side of the main body, and there are a building block $t$ and a building block $y_1$ in the main body. The fourth layer and the sixth layer are almost identical to the second layer, except that the building blocks $Y_1$ and $y_1$ are replaced by $Y_0$ and $y_0$, respectively. These building blocks $Y_0$, $Y_1$, $y_0$ and $y_1$ only appear in the linkers. Therefore, they ensure that to the north (or to the south) of a linker in any tiling of the space, there must be another linker whose main body is exactly $3$ building blocks away from the main body of the former linker. Furthermore, the two linkers are vertically aligned. In other words, the first layers of the two linkers are at the same altitude, the second layers of the two linkers are at the same altitude, and so on. In the third layer, a building block $X$ is attached to the east side of the main body, and there is a building block $x$ in the main body. As the building blocks $X$ and $x$ only appear in the linkers, they ensure that on the east or west of a linker, there must be another linker whose main body is adjacent to the main body of the former one, and they are vertically aligned too (i.e, the first layers of both linkers are on the same altitude, the second layers of both linkers are on the same altitude, and so on). In the fifth layer, there are three functional cubes. In the seventh layer, a building block $\mathbb{L}$ is attached to the top side of the main body (thus this building block $\mathbb{L}$ forms the seventh layer by itself). The first layer has a building block $l$ that matches $\mathbb{L}$ in the main body. For a similar reason, on top (or below) a linker, there must be another linker, and the main body of the two linkers are adjacent.

In general, for a set of $p$ Wang tiles, the corresponding linkers consist of a main body of $1\times 3\times 2p$ building blocks. Two building blocks $U$ and two building blocks $D$ are attached to the first layer of the main body of a $U$-linker and a $D$-linker, respectively. Therefore, the first layer is called the \textit{matching} layer, as it always matches a building block $u$ with another building block $u$, or a building block $d$ with another $d$.  Like the example in Figure \ref{fig_3d_linker}, building blocks $Y_1$ and $y_1$ are used in the second layer, and building blocks $Y_0$ and $y_0$ are used in the $2i$-th layer for $2\leq i\leq p$. As one of the purposes of the building blocks $T$ is to ensure the entanglement between the linkers and the encoders, these $2i$-th ($1\leq i\leq p$) layers are called \textit{interlocking} layers. All other $p-1$ layers, i.e. $(2j+1)$-th ($j=1,\cdots, p-1$) layers, are called \textit{padding} layers. The building blocks $X$ and $x$ only appear in the third layer (we can assume that $p\geq 2$). The building blocks $\mathbb{L}$ and $l$ appear in the $(2p+1)$-th layer and the first layer, respectively. Like the example in Figure~\ref{fig_3d_linker}, the building blocks $X$, $x$, $Y_0$, $y_0$, $Y_1$, $y_1$, $\mathbb{L}$, and $l$ ensure that the linkers must form a rigid structure in any tiling of the space. We describe this rigid structure more precisely below. We call the point at the southwest corner of the bottom face of the unique building block $l$ of a linker (either a $U$-linker or a $D$-linker) the \textit{representative point} of the linker. Without loss of generality, we may assume that in any tiling, the representative point of one of the linkers is the origin of the space, then the rigid structure of the linkers in any tiling can be stated in the following fact.

\begin{Fact}\label{fct_3d_linker}
    In any tiling, the set of representative points of all the linkers forms a lattice $$\{(10x,60y,20pz)|x,y,z \in \mathbb{Z}\}.$$
\end{Fact}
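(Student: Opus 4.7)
The plan is to establish three adjacency results, one for each coordinate axis, and then combine them inductively to obtain the full lattice. Three key observations drive the argument: the pair $(l,\mathbb{L})$ is used to lock two linkers together along $z$, the pair $(x,X)$ along $x$, and the pairs $(y_0,Y_0)$ and $(y_1,Y_1)$ along $y$. Each of these building-block pairs appears only in linkers, and each pair forms a single functional cube when matched, so no other tile can supply the partner.

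For the $z$-axis, the building block $l$ sits at the middle column of the main body on layer~$1$, and $\mathbb{L}$ sits directly above it on layer~$(2p+1)$. Because $\mathbb{L}$ can only be completed by a matching $l$, and $l$ lies in a fixed position inside its home linker, the $\mathbb{L}$ of the origin linker forces a neighboring linker with representative point $(0,0,20p)$; symmetrically, the origin's $l$ forces one at $(0,0,-20p)$. The same matching argument applied to $(x,X)$ on layer~$3$ --- with $x$ at the middle column of the main body and $X$ attached one building block east of it --- produces linker neighbors with representative points at $(\pm 10, 0, 0)$.

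For the $y$-axis, the pair $(y_1,Y_1)$ on layer~$2$ has $y_1$ at the middle column of the main body and $Y_1$ as the fifth (northernmost) building block of the chain of $T$'s and $Y_1$ attached north of the main body. A direct computation of positions shows that the only admissible neighbor whose $y_1$ can complete the origin's $Y_1$ has representative point $(0,60,0)$, and matching the origin's $y_1$ against a neighbor's $Y_1$ forces one at $(0,-60,0)$. The pairs $(y_0,Y_0)$ on the even layers~$4,6,\dots,2p$ yield identical spacing constraints, providing a consistency check rather than an additional restriction.

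Combining the three adjacencies, every linker has exactly six axial neighbors at displacements $(\pm 10, 0, 0)$, $(0, \pm 60, 0)$, and $(0, 0, \pm 20p)$, and a straightforward induction on the coordinates generates the full lattice $\{(10x,60y,20pz) : x,y,z\in\mathbb{Z}\}$. The main thing to check carefully is that none of the distinguished building blocks $l, \mathbb{L}, x, X, y_0, Y_0, y_1, Y_1$ appear in the encoder or filler tiles, so that the required matching partners really must come from other linkers; this holds by direct inspection of the explicit tile constructions, and it is also the place where any future attempt to shrink the building blocks would need to be checked most carefully.
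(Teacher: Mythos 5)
Your proposal is correct and follows essentially the same line of reasoning as the paper: you identify the three pairs of building blocks $(l,\mathbb{L})$, $(x,X)$, $(y_1,Y_1)$ (with $(y_0,Y_0)$ as supporting pairs), observe that each partner can only be supplied by another linker, read off the forced displacements $(\pm10,0,0)$, $(0,\pm60,0)$, $(0,0,\pm20p)$ from the fixed positions of these blocks within the linker, and propagate to the full lattice. One small point worth tightening: you call the $(y_0,Y_0)$ pairs a consistency check, which is accurate for the offset computation, but it is worth emphasizing that the \emph{shape difference} between $y_0$ and $y_1$ is what makes the matching force vertical ($z$-) alignment in the first place — if all interlocking layers used the same block, a north neighbor could slide up or down by an even number of layers and still have all its $y$-blocks matched, so the distinction is doing real work, not just passing a check.
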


Note that there is also a kind of flexibility in the rigid structure stated by Fact \ref{fct_3d_linker}. Each linker in the lattice structure can be chosen to be a $U$-linker or a $D$-linker, independent of each other.

As we mentioned in the previous subsection, a freely movable building block $U$ (or $D$) is a \textit{filler} tile. In all, we have four tiles: one encoder, two linkers ($U$-linker and $D$-linker) and one filler. It is easy to check that all four tiles are connected polycubes.

\subsection{Tiling Pattern and Proof of Theorem \ref{thm_3d_new}}

\begin{proof}[Proof of Theorem \ref{thm_3d_new}]
We prove by reduction from Wang's domino problem which is known to be undecidable by Theorem \ref{thm_berger}. For each instance of Wang's domino problem, namely a set of Wang tiles (assuming the instance has at least two Wang tiles as Wang's domino problem remains undecidable in this case), we have already constructed a set of four tiles in the previous subsection. So it remains to show that the set of Wang tiles can tile the plane if and only if the corresponding set of four polycubes can tile the space.

\begin{itemize}
    \item (\textbf{Linkers must be used.}) First of all, in order to tile the entire space, the linkers must be used. In fact, if the encoder is used, then the linker must be used, because the building blocks $t$ in the encoder can only be matched by the building block $T$ in the linkers. If the filler is used, then the encoder must be used, as the fillers cannot tile the space by themselves and can only be matched by the building blocks $u$ or $d$ that appear exclusively in the encoder. In all cases, the linkers have to be used in the end.

    \item (\textbf{Place encoders in the gaps left by the rigid yet flexible lattice structure of the linkers.}) Since the linkers must be used, they form a lattice structure by Fact \ref{fct_3d_linker} of the previous subsection. A significant part of space left by the lattice formed by the linkers is the gaps between the main bodies of the linkers and their neighboring linkers to the south or to the north (note that there are no gaps between the main body of a linker to the main body of its neighboring linkers to the east or to the west, nor its neighboring linkers above or below). Due to the existence of building blocks $T$ between the main bodies of the linkers in the lattice structure, we can only place the encoders in the gaps.
    
    %If we ignore the building blocks $T$, $U$ and $D$ between the linkers, the gaps is an infinite $\infty\times 3\times \infty$ of building blocks.

    \item (\textbf{Flexibility in choosing an encoding layer of the encoders to be aligned with a matching layer.}) When filling the gaps left by the lattice of linkers with the encoders, because of the specific purpose building blocks $t$ and $T$, the non-encoding layers of the encoders must be aligned with the interlocking layers of the linkers, and the encoding layers of the encoders must be aligned with the matching layers or padding layers of the linkers. Because the encoder and linker have the same number of layers ($6$ layers in the example and $2p$ layers in general), exactly one of the encoding layers (i.e. a simulated Wang tile) is aligned with a matching layer. By Fact \ref{fct_column}, encoders form vertical columns. Therefore, for each column, it is the same simulated Wang tile (the same encoding layer within each encoder) that is aligned with a matching layer in the lattice of linkers. By moving up and down, each column of encoders has the flexibility of choosing which encoding layer to be aligned with a matching layer. Different columns can choose which encoding layer to align with a matching layer independently.

%%%%%   ----- The Tiling Pattern------

\begin{figure}[H]
\begin{center}
\begin{tikzpicture}[scale=0.3,pattern1/.style={draw=orange!70,pattern color=orange!70, pattern=north east lines}]

%%% encoders
\foreach \x in {0,16}
\foreach \y in {0,12} 
{
\draw [fill=gray!20] (\x,\y)--(\x+16,\y)--(\x+16,\y+3)--(\x,\y+3)--(\x,\y);
}

\foreach \x in {-8,8,24}
\foreach \y in {6} 
{
\draw [fill=gray!20] (\x,\y)--(\x+16,\y)--(\x+16,\y+3)--(\x,\y+3)--(\x,\y);
}

%%% linkers
\foreach \x in {0,...,31}
\foreach \y in {3,9} 
{
\draw [fill=orange!20] (\x,\y)--(\x+1,\y)--(\x+1,\y+3)--(\x,\y+3)--(\x,\y);
}

%%% intersection
\foreach \x in {0,...,31}
\foreach \y in {2,6,8,12} 
{
\filldraw [ pattern1] (\x+0,\y+0)--(\x+1,\y+0)--(\x+1,\y+1)--(\x+0,\y+1)--(\x+0,\y+0);
}

%%% encoders
\foreach \x in {0,16}
\foreach \y in {0,12} 
{
\draw  (\x,\y)--(\x+16,\y)--(\x+16,\y+3)--(\x,\y+3)--(\x,\y);
}

\foreach \x in {-8,8,24}
\foreach \y in {6} 
{
\draw  (\x,\y)--(\x+16,\y)--(\x+16,\y+3)--(\x,\y+3)--(\x,\y);
}

\foreach \x in {0,16}
\foreach \y in {2,6} 
{
\node at (\x+0.5,\y+0.5) {$u$};  
\node at (\x+1.5,\y+0.5) {$u$};  
\node at (\x+2.5,\y+0.5) {$u$};  
\node at (\x+3.5,\y+0.5) {$u$};  
}

\foreach \x in {8,24}
\foreach \y in {8,12} 
{
\node at (\x+0.5,\y+0.5) {$u$};  
\node at (\x+1.5,\y+0.5) {$u$};  
\node at (\x+2.5,\y+0.5) {$u$};  
\node at (\x+3.5,\y+0.5) {$u$};  
}

\foreach \x in {4,20}
\foreach \y in {8,12} 
{
\node at (\x+0.5,\y+0.5) {$d$};  
\node at (\x+1.5,\y+0.5) {$d$};  
\node at (\x+2.5,\y+0.5) {$d$};  
\node at (\x+3.5,\y+0.5) {$d$};  
}

\foreach \x in {12,28}
\foreach \y in {2,6} 
{
\node at (\x+0.5,\y+0.5) {$d$};  
\node at (\x+1.5,\y+0.5) {$d$};  
\node at (\x+2.5,\y+0.5) {$d$};  
\node at (\x+3.5,\y+0.5) {$d$};  
}

\draw [thick] (20+17.5,-15+16)--(20+18.5,-14+16)--(20+19.5,-15+16)--(20+18.5,-16+16)--(20+17.5,-15+16);
\draw [thick] (20+21.5,-15+16)--(20+20.5,-14+16)--(20+19.5,-15+16)--(20+20.5,-16+16)--(20+21.5,-15+16);
\draw [thick] (20+16.5,-14+16)--(20+17.5,-13+16)--(20+18.5,-14+16)--(20+17.5,-15+16)--(20+16.5,-14+16);
\draw [thick] (20+20.5,-14+16)--(20+19.5,-13+16)--(20+18.5,-14+16)--(20+19.5,-15+16)--(20+20.5,-14+16);
\draw [thick] (20+20.5,-14+16)--(20+21.5,-13+16)--(20+22.5,-14+16)--(20+21.5,-15+16)--(20+20.5,-14+16);
\draw [thick] (20+17.5,-13+16)--(20+18.5,-14+16)--(20+19.5,-13+16)--(20+18.5,-12+16)--(20+17.5,-13+16);
\draw [thick] (20+21.5,-13+16)--(20+20.5,-14+16)--(20+19.5,-13+16)--(20+20.5,-12+16)--(20+21.5,-13+16);

\draw [color=black!20] (20+16.5,-14+16)--(20+22.5,-14+16);
\draw [color=black!20] (20+17.5,-15+16)--(20+21.5,-15+16);
\draw [color=black!20] (20+17.5,-13+16)--(20+21.5,-13+16);

\draw [color=black!20] (20+18.5,-16+16)--(20+18.5,-12+16);
\draw [color=black!20] (20+20.5,-16+16)--(20+20.5,-12+16);

\draw [color=black!20] (20+21.5,-15+16)--(20+21.5,-13+16);
\draw [color=black!20] (20+19.5,-15+16)--(20+19.5,-13+16);
\draw [color=black!20] (20+17.5,-15+16)--(20+17.5,-13+16);

\end{tikzpicture}
\end{center}
\caption{The tiling pattern in the matching layers.}\label{fig_3d_pattern}
\end{figure}
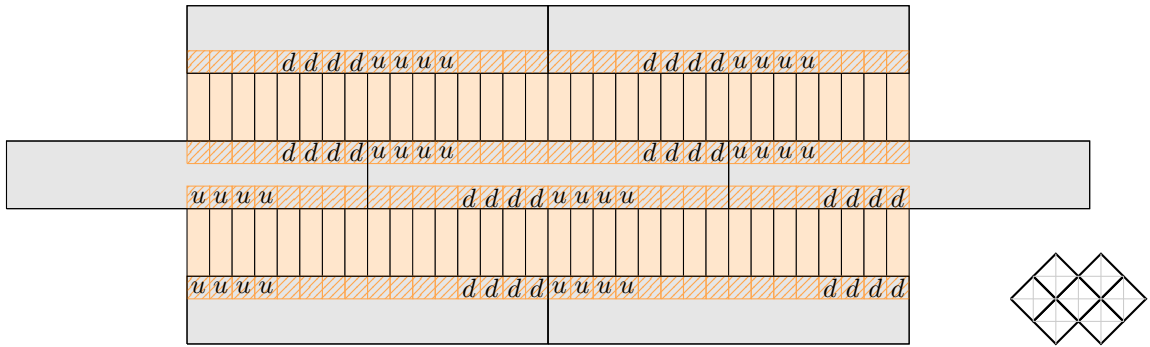

\item (\textbf{The tiling pattern in the matching layers.}) Now let us focus on the matching layers in the lattice of linkers. We claim that in order to tile the entire space, the matching layer must form the pattern as illustrated in Figure \ref{fig_3d_pattern}. By the encoding method we describe in the previous subsection, all the color edges of Wang tiles are simulated by four consecutive building blocks beginning with $d$ and ending with $u$ in the encoding layers. So in the matching layer, the unique four consecutive building blocks $d$ (four consecutive building blocks $u$, resp.) on the south side of an encoding layer can only be matched to the unique four consecutive building blocks $d$ (four consecutive building blocks $u$, resp.) on the north side of an encoding layer of another encoder. This immediately results in the pattern illustrated in Figure \ref{fig_3d_pattern}.

\item (\textbf{Tilability in the non-matching layers.}) Next, let us check all the layers other than the matching layers for the tilability of the space. The non-encoding layers of the encoders and the interlocking layers of the linkers are perfectly matched, and they fill up those layers without overlap. The small gaps between the padding layers of the linkers and the encoding layers of the encoders can be exactly filled by the fillers.

\item (\textbf{Tilability in the matching layers.}) Finally, the tilability of the set of four polycubes depends on the matching layers. By Fact \ref{fct_column}, all the matching layers have to be identical, so we just need to investigate the tilability of one matching layer. A matching layer fills up the space exactly if and only if all the building blocks $u$ and $d$ in the matching layer of the encoders can be matched by the linkers. By the encoding method, this is equivalent to the fact that the simulated Wang tiles can tile the plane.
\end{itemize}
The above arguments show that the set of four polycubes, one encoder, one linker, and two fillers, simulates the tiling of a set of Wang tiles perfectly. In other words, the set of four polycubes constructed from a set of Wang tiles can tile the space if and only if the corresponding set of Wang tiles can tile the plane. This completes the proof.
\end{proof}

We conclude this section by summarizing the novel techniques in the construction of the set of four polycubes, compared to the construction in \cite{yz24c}. Each of these techniques alone may not be able to reduce the number of tiles to four yet retaining undecidability, but they combine to achieve this goal. First, the use of a multiconnected tile, the encoder. The number of tunnels in the encoder increases infinitely as the number of Wang tiles and the number of colors increases. Second, more redundancy is added to the construction. A non-encoding layer is added between every pair of adjacent encoding layers. The encoding method for the colors of the Wang tiles uses two extra bits by adding a prefix $0$ and a suffix $1$. Third, with the redundancy of the encoding method, the rigidity of the tiling pattern in the matching layer is forced in a unified method with the color matching of simulated Wang tiles. Using the same mechanism for two different purposes definitely has the potential of saving one or two tiles. The linkers also play two (if not three or more) roles in the tiling. One is to simulate the matching rules of Wang tiles, the other is to form a rigid lattice structure allowing the encoders to choose a simulated Wang tiles with respect to this rigid structure.

\section{Undecidability of Tiling $4$-dimensional Space}\label{sec_4d}

In this section, by applying the lifting technique introduced in \cite{yz24b,yz24c}, we further reduced the number of tiles by one. See Subsection $3.2$ of \cite{yz24c} for a high-level description of the lifting technique. However, there is a subtle difference in the current paper when applying the lifting technique. In our previous works \cite{yz24b,yz24c}, all the bumps and dents of the building blocks are lifted to a higher dimension. In this paper, only the bumps and dents of some of the building blocks are lifted to the fourth dimension (mostly those building blocks that are used to encode the colors). The bumps and dents of other building blocks remain essentially within $3$-dimensional space. Using different forms of bumps and dents in different dimensions is a more efficient way to exploit the dimensions of the $4$-dimensional space.

\subsection{Building Blocks}

We interpret the fourth dimension as time and use the term \textit{frame} to denote the discrete unit of time. Define $10$ consecutive frames to be a \textit{slice}. Therefore, the $4$-dimensional space is sometimes referred to as \textit{spacetime}, and the subspace of the first three dimensions is referred to as \textit{space} in this section. A $10\times 10\times 10\times 10$ polyhypercube is called a \textit{functional hypercube} in the $4$-dimensional spacetime. So, a $4$-dimensional functional hypercube is a $3$-dimensional functional cube (a $10\times 10\times 10$ polycube) that remains unchanged for $10$ frames. In this section, a building block is a subset of a functional hypercube.

If a $3$-dimensional building block introduced in the previous section remains unchanged for $10$ frames, we get a $4$-dimensional building block, which is called a \textbf{thick} (thick in the fourth dimension, namely time) version of its $3$-dimensional counterpart. By abuse of notation, we use the same letter to denote both a $3$-dimensional building block and its thick version in $4$-dimensional spacetime. For example, in this section, $t$ means a thick version of the building block illustrated in Figure \ref{fig_3d_t}. So the dents and bumps of these thick building blocks remain in the $3$-dimensional space.

In addition to the thick version counterpart of the $3$-dimensional building blocks, we also introduced some real $4$-dimensional building blocks which change over time. In other words, these are the building blocks whose dents and bumps are lifted to a higher dimension. Let $K$ be a $3$-dimensional $10\times 10\times 10$ polycube, so $K$ is a set of $1000$ unit cubes. Let $T_1$ denote the set of unit cubes on the outer surface of $K$. For $i=2,3,4,5$, let $T_i$ be the set of unit cubes on the outer surface of $K-\bigcup_{k=1}^{i-1} T_k$. Thus, we have partitioned $K$ into disjoint union of $5$ sets: $T_i$, $(1\leq i\leq 5)$. We define three building blocks $c^{(4)}$, $C^{(4)}$ and $D^{(4)}$ for encoding the color of Wang tiles as follows, by listing their ten $3$-dimensional $10\times 10\times 10$ frames in order:
\begin{itemize}
\item $c^{(4)}: \emptyset, T_1\cup T_2\cup T_3 \cup T_5, T_1 \cup T_3 \cup T_5,    T_1 \cup T_5, K, K, K, K, K, K;$
\item
$C^{(4)}: K, T_4, T_2 \cup T_4,   T_2 \cup T_3 \cup T_4, \emptyset, \emptyset, \emptyset,  \emptyset, \emptyset, \emptyset;$
\item 
$D^{(4)}: \emptyset, \emptyset, \emptyset, \emptyset, \emptyset, K,  T_4, T_2 \cup T_4,   T_2 \cup T_3 \cup T_4, \emptyset.$
\end{itemize}

Note that the building blocks $C^{(4)}$ and $D^{(4)}$ are identical except that $C^{(4)}$ lives in the first $5$ frames of a building block and $D^{(4)}$ lives in the last $5$ frames. This will make a difference when $C^{(4)}$ and $D^{(4)}$ are attached to other building blocks in a bigger tile. The building block $c^{(4)}$ is also the \textit{filler}, which is one of the three tiles that we will construct in the next subsection. Note that we use a superscript $^{(4)}$ to indicate that the dents and bumps of these building blocks are in the fourth dimension.

The next two pairs of building blocks $v^{(4)}$ and $V^{(4)}$, and $w^{(4)}$ and $W^{(4)}$ will be used to ensure the alignment of the encoders with respect to time:
\begin{itemize}
    \item 
$v^{(4)}: \emptyset, T_1\cup T_2\cup T_3 \cup T_4, T_1 \cup T_3 \cup T_4, T_1, K, K, K, K, K, K;$
 \item 
$V^{(4)}: K, T_5, T_2 \cup T_5,   T_2 \cup T_3 \cup T_4  \cup  T_5, \emptyset, \emptyset, \emptyset, \emptyset, \emptyset, \emptyset;$
 \item 
$w^{(4)}: \emptyset, T_1\cup T_2\cup T_3 \cup T_4, T_1\cup T_2\cup T_4, T_1 \cup T_2, K, K, K, K, K, K;$
 \item 
$W^{(4)}: K, T_5, T_3 \cup T_5, T_3 \cup T_4  \cup  T_5, \emptyset, \emptyset, \emptyset, \emptyset, \emptyset, \emptyset.$
\end{itemize}

Finally, the building block $\mathbb{E}^{(4)}$ is defined as a building block with $14$ frames for convenience. In other words, it is contained in a $10\times 10\times 10\times 14$ polyhypercube:
\begin{itemize}
    \item $\mathbb{E}^{(4)}:   T_1\cup T_2\cup T_3, T_1\cup T_2\cup T_3, T_1 \cup T_3 , T_1 , K, K, K, K, K, K, T_4 \cup T_5, T_4 \cup T_5, T_2 \cup T_4 \cup T_5,  T_2 \cup  T_3 \cup T_4  \cup  T_5. $
\end{itemize}

The building block $\mathbb{E}^{(4)}$ is used to ensure that the tiling of the $4$-dimensional space is essentially the same for every slice (recall that $1$ slice consists of $10$ consecutive frames).

It is easy to check that all the above $8$ newly defined $4$-dimensional building blocks are connected in spacetime.

\subsection{The Set of Three Tiles}

In this subsection, we describe the method of constructing a set of three tiles in $4$-dimensional space for every set of Wang tiles, by taking the set of Wang tiles in Figure \ref{fig_w3} as an example. They are basically all lifted from the set of $3$-dimensional tiles defined in the previous section.

The first tile is the \textit{encoder} illustrated in Figure \ref{fig_4d_encoder}. It is a thick version of the $3$-dimensional encoder we defined in the previous section, with a few modifications. So the encoder is lifted from the $3$-dimensional encoder. As the encoder is a tile in the $4$-dimensional space, Figure \ref{fig_4d_encoder} in fact depicts its projection to the subspace of the first three dimensions. As all the building blocks are made of $10$ frames (except $\mathbb{E}^{(4)}$ which has $14$ frames), they lie in the same slice in the fourth dimension (except the last $4$ frames of $\mathbb{E}^{(4)}$ extend to the next slice). In other words, the $i$-th frame of one building block is aligned with the $i$-th frame of another building block, for $1\leq i\leq 10$. Therefore, Figure \ref{fig_4d_encoder} gives a complete definition of the encoder.

More than half of the building blocks remain unchanged over time. The $3$-dimensional building blocks $u$ and $d$ are replaced by (or lifted to) $D^{(4)}$ and $C^{(4)}$, respectively. This is the most important change. The bumps and dents of the building blocks that are related to encoding the colors are lifted to the fourth dimension. The encoding method is essentially unchanged, and $0$ and $1$ is now physically represented by $C^{(4)}$ and $D^{(4)}$, respectively. The pair of building blocks $\mathbb{E}$ and $e$ are replaced by (or lifted to) $W^{(4)}$ and $w^{(4)}$, respectively. A pair of building blocks $V^{(4)}$ and $v^{(4)}$ are added to each encoding layer.  A building block $\mathbb{E}^{(4)}$ is added to the first layer. The building blocks $W^{(4)}$, $w^{(4)}$, $V^{(4)}$, $v^{(4)}$ and $\mathbb{E}^{(4)}$ are added or lifted for the issues related to the alignment with time (i.e. the fourth dimension).

%%%  ----  ENCODER  [4D] ------

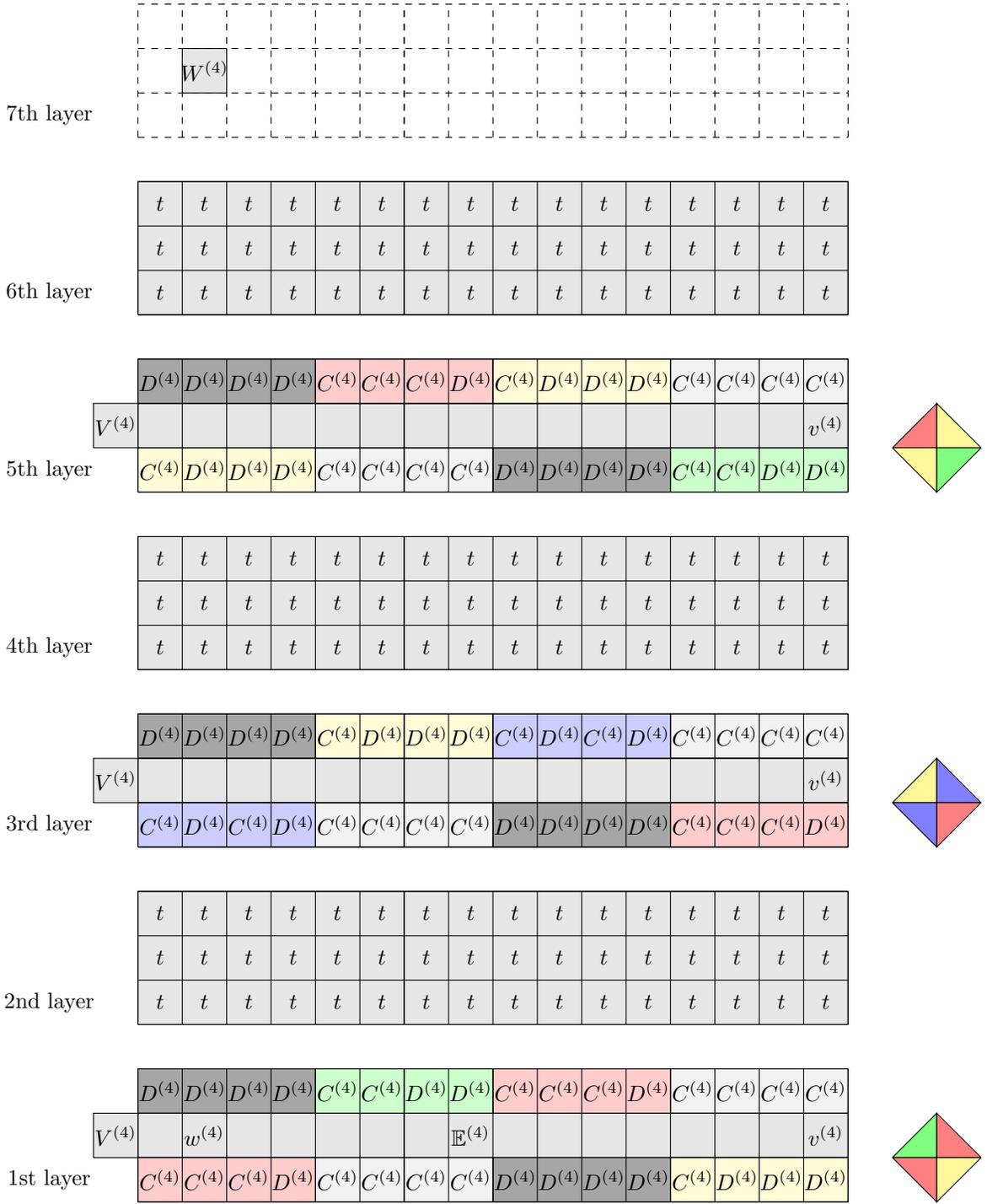
\begin{figure}[H]
\begin{center}
\begin{tikzpicture}[scale=0.7,pattern1/.style={draw=red,pattern color=gray!70, pattern=north east lines}]

%%% DARK 1111
\foreach \x in {0}
\foreach \y in {2,10,18} 
{
\draw [ fill=gray!70] (\x+0,\y+0)--(\x+4,\y+0)--(\x+4,\y+1)--(\x+0,\y+1)--(\x+0,\y+0);
\node at (\x+0.5,\y+0.5) {$D^{(4)}$};  
\node at (\x+1.5,\y+0.5) {$D^{(4)}$};  
\node at (\x+2.5,\y+0.5) {$D^{(4)}$};  
\node at (\x+3.5,\y+0.5) {$D^{(4)}$};  
}
\foreach \x in {8}
\foreach \y in {0,8,16} 
{
\draw [ fill=gray!70] (\x+0,\y+0)--(\x+4,\y+0)--(\x+4,\y+1)--(\x+0,\y+1)--(\x+0,\y+0);
\node at (\x+0.5,\y+0.5) {$D^{(4)}$};  
\node at (\x+1.5,\y+0.5) {$D^{(4)}$};  
\node at (\x+2.5,\y+0.5) {$D^{(4)}$};  
\node at (\x+3.5,\y+0.5) {$D^{(4)}$};  
}
%%% shaded 0000
\foreach \x in {4}
\foreach \y in {0,8,16} 
{
%\filldraw [ pattern1] (\x+0,\y+0)--(\x+4,\y+0)--(\x+4,\y+1)--(\x+0,\y+1)--(\x+0,\y+0);
\draw [ fill=gray!10 ] (\x+0,\y+0)--(\x+4,\y+0)--(\x+4,\y+1)--(\x+0,\y+1)--(\x+0,\y+0);
\node at (\x+0.5,\y+0.5) {$C^{(4)}$};  
\node at (\x+1.5,\y+0.5) {$C^{(4)}$};  
\node at (\x+2.5,\y+0.5) {$C^{(4)}$};  
\node at (\x+3.5,\y+0.5) {$C^{(4)}$};  
}
\foreach \x in {12}
\foreach \y in {2,10,18} 
{
%\filldraw [ pattern1] (\x+0,\y+0)--(\x+4,\y+0)--(\x+4,\y+1)--(\x+0,\y+1)--(\x+0,\y+0);
\draw [ fill=gray!10 ] (\x+0,\y+0)--(\x+4,\y+0)--(\x+4,\y+1)--(\x+0,\y+1)--(\x+0,\y+0);
\node at (\x+0.5,\y+0.5) {$C^{(4)}$};  
\node at (\x+1.5,\y+0.5) {$C^{(4)}$};  
\node at (\x+2.5,\y+0.5) {$C^{(4)}$};  
\node at (\x+3.5,\y+0.5) {$C^{(4)}$};  
}

%%%  NORMAL cubes
\foreach \x in {0}
\foreach \y in {1,9,17} 
{
\draw [ fill=gray!20] (\x+0,\y+0)--(\x+16,\y+0)--(\x+16,\y+1)--(\x+0,\y+1)--(\x+0,\y+0);
}
\foreach \x in {0}
\foreach \y in {4,12,20} 
{
\draw [ fill=gray!20] (\x+0,\y+0)--(\x+16,\y+0)--(\x+16,\y+3)--(\x+0,\y+3)--(\x+0,\y+0);
}

% RED

\foreach \x in {0}
\foreach \y in {0} 
{
\draw [ fill=red!20] (\x+0,\y+0)--(\x+4,\y+0)--(\x+4,\y+1)--(\x+0,\y+1)--(\x+0,\y+0);
\node at (\x+0.5,\y+0.5) {$C^{(4)}$};  
\node at (\x+1.5,\y+0.5) {$C^{(4)}$};  
\node at (\x+2.5,\y+0.5) {$C^{(4)}$};  
\node at (\x+3.5,\y+0.5) {$D^{(4)}$};  
}
\foreach \x in {8}
\foreach \y in {2} 
{
\draw [ fill=red!20] (\x+0,\y+0)--(\x+4,\y+0)--(\x+4,\y+1)--(\x+0,\y+1)--(\x+0,\y+0);
\node at (\x+0.5,\y+0.5) {$C^{(4)}$};  
\node at (\x+1.5,\y+0.5) {$C^{(4)}$};  
\node at (\x+2.5,\y+0.5) {$C^{(4)}$};  
\node at (\x+3.5,\y+0.5) {$D^{(4)}$};  
}

\foreach \x in {12}
\foreach \y in {8} 
{
\draw [ fill=red!20] (\x+0,\y+0)--(\x+4,\y+0)--(\x+4,\y+1)--(\x+0,\y+1)--(\x+0,\y+0);
\node at (\x+0.5,\y+0.5) {$C^{(4)}$};  
\node at (\x+1.5,\y+0.5) {$C^{(4)}$};  
\node at (\x+2.5,\y+0.5) {$C^{(4)}$};  
\node at (\x+3.5,\y+0.5) {$D^{(4)}$};  
}

\foreach \x in {4}
\foreach \y in {18} 
{
\draw [ fill=red!20] (\x+0,\y+0)--(\x+4,\y+0)--(\x+4,\y+1)--(\x+0,\y+1)--(\x+0,\y+0);
\node at (\x+0.5,\y+0.5) {$C^{(4)}$};  
\node at (\x+1.5,\y+0.5) {$C^{(4)}$};  
\node at (\x+2.5,\y+0.5) {$C^{(4)}$};  
\node at (\x+3.5,\y+0.5) {$D^{(4)}$};  
}

% YELLOW

\foreach \x in {12}
\foreach \y in {0} 
{
\draw [ fill=yellow!20] (\x+0,\y+0)--(\x+4,\y+0)--(\x+4,\y+1)--(\x+0,\y+1)--(\x+0,\y+0);
\node at (\x+0.5,\y+0.5) {$C^{(4)}$};  
\node at (\x+1.5,\y+0.5) {$D^{(4)}$};  
\node at (\x+2.5,\y+0.5) {$D^{(4)}$};  
\node at (\x+3.5,\y+0.5) {$D^{(4)}$};  
}
\foreach \x in {4}
\foreach \y in {10} 
{
\draw [ fill=yellow!20] (\x+0,\y+0)--(\x+4,\y+0)--(\x+4,\y+1)--(\x+0,\y+1)--(\x+0,\y+0);
\node at (\x+0.5,\y+0.5) {$C^{(4)}$};  
\node at (\x+1.5,\y+0.5) {$D^{(4)}$};  
\node at (\x+2.5,\y+0.5) {$D^{(4)}$};  
\node at (\x+3.5,\y+0.5) {$D^{(4)}$};  
}

\foreach \x in {0}
\foreach \y in {16} 
{
\draw [ fill=yellow!20] (\x+0,\y+0)--(\x+4,\y+0)--(\x+4,\y+1)--(\x+0,\y+1)--(\x+0,\y+0);
\node at (\x+0.5,\y+0.5) {$C^{(4)}$};  
\node at (\x+1.5,\y+0.5) {$D^{(4)}$};  
\node at (\x+2.5,\y+0.5) {$D^{(4)}$};  
\node at (\x+3.5,\y+0.5) {$D^{(4)}$};  
}

\foreach \x in {8}
\foreach \y in {18} 
{
\draw [ fill=yellow!20] (\x+0,\y+0)--(\x+4,\y+0)--(\x+4,\y+1)--(\x+0,\y+1)--(\x+0,\y+0);
\node at (\x+0.5,\y+0.5) {$C^{(4)}$};  
\node at (\x+1.5,\y+0.5) {$D^{(4)}$};  
\node at (\x+2.5,\y+0.5) {$D^{(4)}$};  
\node at (\x+3.5,\y+0.5) {$D^{(4)}$};  
}

% GREEN
\foreach \x in {4}
\foreach \y in {2} 
{
\draw [ fill=green!20] (\x+0,\y+0)--(\x+4,\y+0)--(\x+4,\y+1)--(\x+0,\y+1)--(\x+0,\y+0);
\node at (\x+0.5,\y+0.5) {$C^{(4)}$};  
\node at (\x+1.5,\y+0.5) {$C^{(4)}$};  
\node at (\x+2.5,\y+0.5) {$D^{(4)}$};  
\node at (\x+3.5,\y+0.5) {$D^{(4)}$};  
}

\foreach \x in {12}
\foreach \y in {16} 
{
\draw [ fill=green!20] (\x+0,\y+0)--(\x+4,\y+0)--(\x+4,\y+1)--(\x+0,\y+1)--(\x+0,\y+0);
\node at (\x+0.5,\y+0.5) {$C^{(4)}$};  
\node at (\x+1.5,\y+0.5) {$C^{(4)}$};  
\node at (\x+2.5,\y+0.5) {$D^{(4)}$};  
\node at (\x+3.5,\y+0.5) {$D^{(4)}$};  
}

%BLUE
\foreach \x in {8}
\foreach \y in {10} 
{
\draw [ fill=blue!20] (\x+0,\y+0)--(\x+4,\y+0)--(\x+4,\y+1)--(\x+0,\y+1)--(\x+0,\y+0);
\node at (\x+0.5,\y+0.5) {$C^{(4)}$};  
\node at (\x+1.5,\y+0.5) {$D^{(4)}$};  
\node at (\x+2.5,\y+0.5) {$C^{(4)}$};  
\node at (\x+3.5,\y+0.5) {$D^{(4)}$};  
}
\foreach \x in {0}
\foreach \y in {8} 
{
\draw [ fill=blue!20] (\x+0,\y+0)--(\x+4,\y+0)--(\x+4,\y+1)--(\x+0,\y+1)--(\x+0,\y+0);
\node at (\x+0.5,\y+0.5) {$C^{(4)}$};  
\node at (\x+1.5,\y+0.5) {$D^{(4)}$};  
\node at (\x+2.5,\y+0.5) {$C^{(4)}$};  
\node at (\x+3.5,\y+0.5) {$D^{(4)}$};  
}

%%%%% LAYERs

\foreach \x in {0}
\foreach \y in {0,...,23} 
{ 
\draw  (\x+0,0+\y)--(\x+16,0+\y);
}

\foreach \x in {0}
\foreach \y in {0,4,8,12,16,20} 
{ 
\draw  (\x,0+\y)--(\x,3+\y);
\draw  (\x+10,0+\y)--(\x+10,3+\y);
\draw  (\x+9,0+\y)--(\x+9,3+\y);
\draw  (\x+8,0+\y)--(\x+8,3+\y);
\draw  (\x+7,0+\y)--(\x+7,3+\y);
\draw  (\x+6,0+\y)--(\x+6,3+\y);
\draw  (\x+5,0+\y)--(\x+5,3+\y);
\draw  (\x+4,0+\y)--(\x+4,3+\y);
\draw  (\x+3,0+\y)--(\x+3,3+\y);
\draw  (\x+2,0+\y)--(\x+2,3+\y);
\draw  (\x+1,0+\y)--(\x+1,3+\y);
\draw  (\x+6,0+\y)--(\x+6,3+\y);
\draw  (\x+15,0+\y)--(\x+15,3+\y);
\draw  (\x+14,0+\y)--(\x+14,3+\y);
\draw  (\x+13,0+\y)--(\x+13,3+\y);
\draw  (\x+12,0+\y)--(\x+12,3+\y);
\draw  (\x+11,0+\y)--(\x+11,3+\y);
\draw  (\x+16,0+\y)--(\x+16,3+\y);
}

%dashed
\foreach \x in {0}
\foreach \y in {24} 
{ 
\draw [dashed]  (0,\y)--(16,\y);\draw [dashed]  (0,\y+3)--(16,\y+3);
\draw [dashed]  (0,\y+2)--(16,\y+2); \draw [dashed]  (0,\y+1)--(16,\y+1);

\draw [dashed]  (\x,0+\y)--(\x,3+\y);
\draw [dashed]  (\x+10,0+\y)--(\x+10,3+\y);
\draw [dashed]  (\x+9,0+\y)--(\x+9,3+\y);
\draw [dashed]  (\x+8,0+\y)--(\x+8,3+\y);
\draw [dashed]  (\x+7,0+\y)--(\x+7,3+\y);
\draw [dashed]  (\x+6,0+\y)--(\x+6,3+\y);
\draw [dashed]  (\x+5,0+\y)--(\x+5,3+\y);
\draw [dashed]  (\x+4,0+\y)--(\x+4,3+\y);
\draw [dashed]  (\x+3,0+\y)--(\x+3,3+\y);
\draw [dashed]  (\x+2,0+\y)--(\x+2,3+\y);
\draw [dashed]  (\x+1,0+\y)--(\x+1,3+\y);
\draw [dashed]  (\x+6,0+\y)--(\x+6,3+\y);
\draw [dashed]  (\x+15,0+\y)--(\x+15,3+\y);
\draw [dashed]  (\x+14,0+\y)--(\x+14,3+\y);
\draw [dashed]  (\x+13,0+\y)--(\x+13,3+\y);
\draw [dashed]  (\x+12,0+\y)--(\x+12,3+\y);
\draw [dashed]  (\x+11,0+\y)--(\x+11,3+\y);
\draw [dashed]  (\x+16,0+\y)--(\x+16,3+\y);
}

\node at (-2,0.5) {$1$st layer};  
\node at (-2,4.5) {$2$nd layer};  
\node at (-2,8.5) {$3$rd layer};  
\node at (-2,12.5) {$4$th layer};    
\node at (-2,16.5) {$5$th layer};  
\node at (-2,20.5) {$6$th layer};
\node at (-2,24.5) {$7$th layer};

\foreach \x in {17}
\foreach \y in {1} 
{ 
\draw [ fill=green!50] (\x+0,0+\y)--(\x+1,1+\y)--(\x+1,0+\y)--(\x+0,0+\y);
\draw [ fill=red!50] (\x+0,0+\y)--(\x+1,-1+\y)--(\x+1,0+\y)--(\x+0,0+\y);
\draw [ fill=red!50] (\x+2,0+\y)--(\x+1,1+\y)--(\x+1,0+\y)--(\x+2,0+\y);
\draw [ fill=yellow!50] (\x+2,0+\y)--(\x+1,-1+\y)--(\x+1,0+\y)--(\x+2,0+\y);
}

\foreach \x in {17}
\foreach \y in {9} 
{ 
\draw [ fill=yellow!50] (\x+0,0+\y)--(\x+1,1+\y)--(\x+1,0+\y)--(\x+0,0+\y);
\draw [ fill=blue!50] (\x+0,0+\y)--(\x+1,-1+\y)--(\x+1,0+\y)--(\x+0,0+\y);
\draw [ fill=blue!50] (\x+2,0+\y)--(\x+1,1+\y)--(\x+1,0+\y)--(\x+2,0+\y);
\draw [ fill=red!50] (\x+2,0+\y)--(\x+1,-1+\y)--(\x+1,0+\y)--(\x+2,0+\y);
}

\foreach \x in {17}
\foreach \y in {17} 
{ 
\draw [ fill=red!50] (\x+0,0+\y)--(\x+1,1+\y)--(\x+1,0+\y)--(\x+0,0+\y);
\draw [ fill=yellow!50] (\x+0,0+\y)--(\x+1,-1+\y)--(\x+1,0+\y)--(\x+0,0+\y);
\draw [ fill=yellow!50] (\x+2,0+\y)--(\x+1,1+\y)--(\x+1,0+\y)--(\x+2,0+\y);
\draw [ fill=green!50] (\x+2,0+\y)--(\x+1,-1+\y)--(\x+1,0+\y)--(\x+2,0+\y);
}

% block labels
\foreach \x in {0,...,15}
\foreach \y in {4,5,6,12,13,14,20,21,22} 
{ 
\node at (\x+0.5,\y+0.5) {$t$};  
}

% 7th layer
\foreach \x in {1}
\foreach \y in {25} 
{
\draw [ fill=gray!20] (\x+0,\y+0)--(\x+1,\y+0)--(\x+1,\y+1)--(\x+0,\y+1)--(\x+0,\y+0);
\node at (\x+0.5,\y+0.5) {$W^{(4)}$};  
}

\foreach \x in {1}
\foreach \y in {1} 
{
\node at (\x+0.5,\y+0.5) {$w^{(4)}$};  
}
\foreach \x in {7}
\foreach \y in {1} 
{
\node at (\x+0.5,\y+0.5) {$\mathbb{E}^{(4)}$};  
}

% V building blocks
\foreach \x in {-1}
\foreach \y in {1,9,17} 
{
\draw [ fill=gray!20] (\x+0,\y+0)--(\x+1,\y+0)--(\x+1,\y+1)--(\x+0,\y+1)--(\x+0,\y+0);
\node at (\x+0.5,\y+0.5) {$V^{(4)}$};  \node at (\x+16.5,\y+0.5) {$v^{(4)}$};  
}

\end{tikzpicture}
\end{center}
\caption{$3$-dimensional projection of the encoder.}\label{fig_4d_encoder}
\end{figure}

Similarly, the \textit{linker} is a thick version lifted from its $3$-dimensional counterpart defined in the previous section, with just one modification. Like the $4$-dimensional encoder, since all the building blocks lie in the same slice, it suffices to depict their projection to the $3$-dimensional subspace of the first three dimensions in Figure \ref{fig_4d_linker}. Unlike the previous section in which we have two linkers, thanks to the lifting technique, we only need a single linker in the $4$-dimensional case, as both the building blocks $U$ and $D$ are replaced with (or lifted to) the building block $c^{(4)}$.

As we have mentioned in the previous subsection, the third tile, the \textit{filler}, is just the building block $c^{(4)}$. So we have completed the construction of the set of three tiles: an encoder, a linker, and a filler. It can be checked that all three tiles are connected.

%%%%%   ----- LINKER  [4D] ------

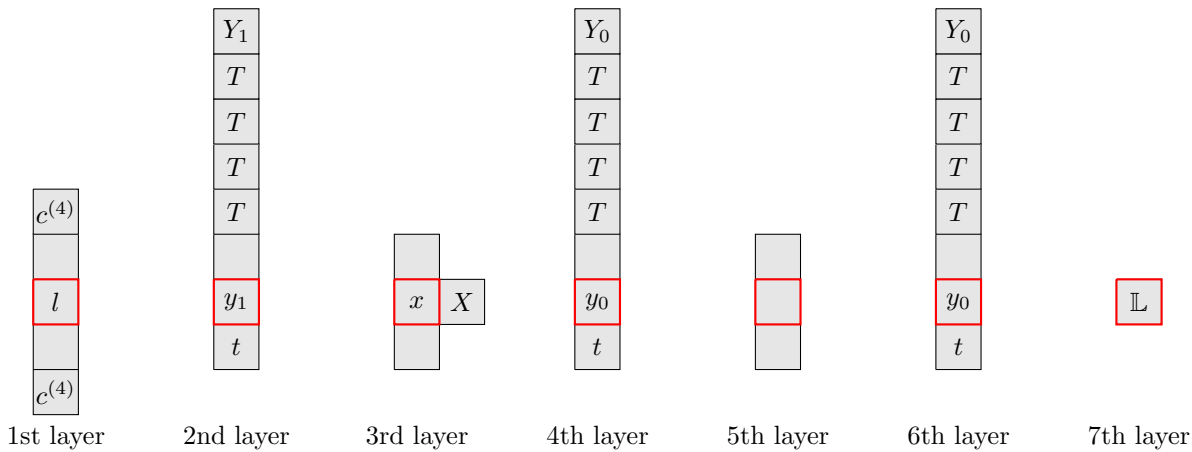
\begin{figure}[H]
\begin{center}
\begin{tikzpicture}[scale=0.6]

%%% DARK 1111
\foreach \x in {0,4, 8, 12, 16, 20}
\foreach \y in {0} 
{
\draw [ fill=gray!20] (\x+0,\y+0)--(\x+1,\y+0)--(\x+1,\y+3)--(\x+0,\y+3)--(\x+0,\y+0);

\draw (\x+0,\y+1)--(\x+1,\y+1);
\draw (\x+0,\y+2)--(\x+1,\y+2);
}

%%% extending bumps
\foreach \x in {0}
\foreach \y in {0} 
{
\draw [ fill=gray!20] (\x+0,\y+0)--(\x+1,\y+0)--(\x+1,\y-1)--(\x+0,\y-1)--(\x+0,\y+0);
\draw [ fill=gray!20] (\x+0,\y+4)--(\x+1,\y+4)--(\x+1,\y+3)--(\x+0,\y+3)--(\x+0,\y+4);
}
%%% extending bumps part 2 (Y)
\foreach \x in {4,12,20}
\foreach \y in {0,1,2,3,4} 
{
\draw [ fill=gray!20] (\x+0,\y+4)--(\x+1,\y+4)--(\x+1,\y+3)--(\x+0,\y+3)--(\x+0,\y+4);
}

% X bumps
\foreach \x in {9}
\foreach \y in {1} 
{

\draw [ fill=gray!20] (\x+0,\y+1)--(\x+1,\y+1)--(\x+1,\y+0)--(\x+0,\y+0)--(\x+0,\y+1);
}

% L bumps
\foreach \x in {24}
\foreach \y in {1} 
{

\draw [ fill=gray!20] (\x+0,\y+1)--(\x+1,\y+1)--(\x+1,\y+0)--(\x+0,\y+0)--(\x+0,\y+1);
}

\node at (0.5,-1.5) {$1$st layer};  
\node at (4.5,-1.5) {$2$nd layer};  
\node at (8.5,-1.5) {$3$rd layer};  
\node at (12.5,-1.5) {$4$th layer};    
\node at (16.5,-1.5) {$5$th layer};  
\node at (20.5,-1.5) {$6$th layer};
\node at (24.5,-1.5) {$7$th layer};

% block labels U/D
\foreach \x in {0}
\foreach \y in {-1,3} 
{ 
\node at (\x+0.5,\y+0.5) {$c^{(4)}$};  
}
% block labels T
\foreach \x in {4,12,20}
\foreach \y in {0} 
{ 
\node at (\x+0.5,\y+0.5) {$t$};  
}

\foreach \x in {4,12,20}
\foreach \y in {3,4,5,6} 
{ 
\node at (\x+0.5,\y+0.5) {$T$};  
}

% block labels  Y direction
\foreach \x in {12,20}
\foreach \y in {1} 
{ 
\node at (\x+0.5,\y+0.5) {$y_0$};  
}

\foreach \x in {12,20}
\foreach \y in {7} 
{ 
\node at (\x+0.5,\y+0.5) {$Y_0$};  
}

% block labels  Y direction
\foreach \x in {4}
\foreach \y in {1} 
{ 
\node at (\x+0.5,\y+0.5) {$y_1$};  
}

\foreach \x in {4}
\foreach \y in {7} 
{ 
\node at (\x+0.5,\y+0.5) {$Y_1$};  
}

% block labels  X direction
\foreach \x in {8}
\foreach \y in {1} 
{ 
\node at (\x+0.5,\y+0.5) {$x$};  
}

\foreach \x in {9}
\foreach \y in {1} 
{ 
\node at (\x+0.5,\y+0.5) {$X$};  
}
% LINKER label
\foreach \x in {24}
\foreach \y in {1} 
{ 
\node at (\x+0.5,\y+0.5) {$\mathbb{L}$};  
}
\foreach \x in {0}
\foreach \y in {1} 
{ 
\node at (\x+0.5,\y+0.5) {$l$};  
}

%%% vertical align squares
\foreach \x in {0,4,8,12,16,20,24}
\foreach \y in {1} 
{
\draw [ thick, color=red] (\x+0,\y+1)--(\x+1,\y+1)--(\x+1,\y+0)--(\x+0,\y+0)--(\x+0,\y+1);
}

\end{tikzpicture}
\end{center}
\caption{$3$-dimensional projection of the linker.}\label{fig_4d_linker}
\end{figure}

\subsection{Proof of Theorem \ref{thm_main}}
\begin{proof}[Proof of Theorem \ref{thm_main}]
For any set of Wang tiles, we have already constructed a set of three tiles in the $4$-dimensional space in the previous subsection. To complete the proof, it suffices to show that the set of three tiles can tile the $4$-dimensional space if and only if the corresponding set of Wang tiles can tile the plane.

\begin{itemize}
    \item (\textbf{Linkers must be used.}) By the same argument as the proof of Theorem \ref{thm_3d_new}, the linkers must be used in order to tile the $4$-dimensional space.

    \item (\textbf{Linkers form rigid lattice structure in space, but flexible in time.}) By slightly more involved argument than that of Fact \ref{fct_3d_linker}, the linkers must form a rigid lattice structure in the $3$-dimensional space. In fact, by considering a slice of spacetime, the linkers must form a rigid lattice structure within this slice. As we will see in the next few paragraphs, the gaps between the main bodies of the lattice structure must be filled by the encoders. The building blocks $\mathbb{E}^{(4)}$ ensure that the encoders must be the same in every slice. This in turn ensures that the $3$-dimensional lattice structure of the linkers must be rigid as time flows (i.e., the lattice structure remains the same in every slice).

    However, each linker in the lattice has the flexibility to slide along the direction of time (the fourth dimension). Linkers at distinct locations of space can slide along the direction of time independently of each other. Linkers at the same location of the space can slide consistently. Note the difference in the flexibility of the linkers between the $3$-dimensional case and the $4$-dimensional case. In the $3$-dimensional case in the previous section, the linkers are flexible as they can be chosen from the two linkers: $D$-linker and $U$-linker.

    \item (\textbf{Matching encoders with linkers.}) For the same reason, the $3$-dimensional space left by the linkers (in some slices) can only be filled by encoders. Note that because the linkers can slide in time, locally in the spacetime, it can not only match a building block $C^{(4)}$ with another $C^{(4)}$ but also match a building block $D^{(4)}$ with another $D^{(4)}$. This is the good side, one linker for two jobs, which saves us a tile. But there is also a bad side, the linkers now may even match a building block $C^{(4)}$ with a building block $D^{(4)}$ locally. In this situation, the two building blocks $C^{(4)}$ and $D^{(4)}$ are not aligned in time, and they are not in the same slice. We will soon show that this is impossible as all the encoders have to be aligned in time.

    \item (\textbf{The encoders must be aligned in time.}) Suppose the contrast that the four consecutive building blocks $D^{(4)}$ on the south side of an encoding layer of an encoder in the matching layer is matched to the four consecutive building blocks $C^{(4)}$ on the north side of an encoding layer of another encoder (see Figure \ref{fig_4d_align}). Then we get an immediate contradiction between the four consecutive building blocks $C^{(4)}$ on the south side of the first encoder and the corresponding building blocks on the north side of the second encoder (see the building blocks marked $*$ in Figure \ref{fig_4d_align}).

%%%  

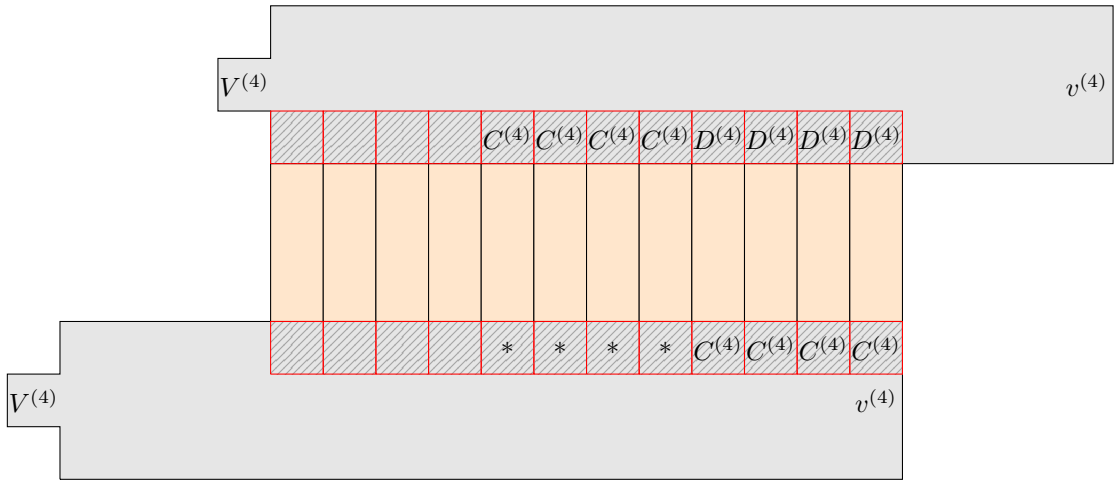
\begin{figure}[H]
\begin{center}
\begin{tikzpicture}[scale=0.7,pattern1/.style={draw=red,pattern color=gray!70, pattern=north east lines}]

%%% encoders
\foreach \x in {-4}
\foreach \y in {0} 
{
\draw [fill=gray!20] (\x,\y)--(\x+16,\y)--(\x+16,\y+3)--(\x,\y+3)--(\x,\y+2)--(\x-1,\y+2)--(\x-1,\y+1)--(\x,\y+1)--(\x,\y);
}

\foreach \x in {0}
\foreach \y in {6} 
{
\draw [fill=gray!20] (\x,\y)--(\x+16,\y)--(\x+16,\y+3)--(\x,\y+3)--(\x,\y+2)--(\x-1,\y+2)--(\x-1,\y+1)--(\x,\y+1)--(\x,\y);
}

%%% linkers
\foreach \x in {0,...,11}
\foreach \y in {3} 
{
\draw [fill=orange!20] (\x,\y)--(\x+1,\y)--(\x+1,\y+3)--(\x,\y+3)--(\x,\y);
}

%%% intersection
\foreach \x in {0,...,11}
\foreach \y in {2,6} 
{
\filldraw [ pattern1] (\x+0,\y+0)--(\x+1,\y+0)--(\x+1,\y+1)--(\x+0,\y+1)--(\x+0,\y+0);
}

\foreach \x in {8}
\foreach \y in {6} 
{
\node at (\x+0.5,\y+0.5) {$D^{(4)}$};  
\node at (\x+1.5,\y+0.5) {$D^{(4)}$};  
\node at (\x+2.5,\y+0.5) {$D^{(4)}$};  
\node at (\x+3.5,\y+0.5) {$D^{(4)}$};  
}

\foreach \x in {4}
\foreach \y in {6} 
{
\node at (\x+0.5,\y+0.5) {$C^{(4)}$};  
\node at (\x+1.5,\y+0.5) {$C^{(4)}$};  
\node at (\x+2.5,\y+0.5) {$C^{(4)}$};  
\node at (\x+3.5,\y+0.5) {$C^{(4)}$};  
}

\foreach \x in {8}
\foreach \y in {2} 
{
\node at (\x+0.5,\y+0.5) {$C^{(4)}$};  
\node at (\x+1.5,\y+0.5) {$C^{(4)}$};  
\node at (\x+2.5,\y+0.5) {$C^{(4)}$};  
\node at (\x+3.5,\y+0.5) {$C^{(4)}$};  
}
\foreach \x in {4}
\foreach \y in {2} 
{
\node at (\x+0.5,\y+0.5) {$*$};  
\node at (\x+1.5,\y+0.5) {$*$};  
\node at (\x+2.5,\y+0.5) {$*$};  
\node at (\x+3.5,\y+0.5) {$*$};  
}

\foreach \x in {-5}
\foreach \y in {1} 
{
\node at (\x+0.5,\y+0.5) {$V^{(4)}$};  
}
\foreach \x in {-1}
\foreach \y in {7} 
{
\node at (\x+0.5,\y+0.5) {$V^{(4)}$};  
}

\foreach \x in {11}
\foreach \y in {1} 
{
\node at (\x+0.5,\y+0.5) {$v^{(4)}$};  
}
\foreach \x in {15}
\foreach \y in {7} 
{
\node at (\x+0.5,\y+0.5) {$v^{(4)}$};  
}

\end{tikzpicture}
\end{center}
\caption{$3$-dimensional projection of the linking of two encoders in the matching layer.}\label{fig_4d_align}
\end{figure}

%%%%%%%%

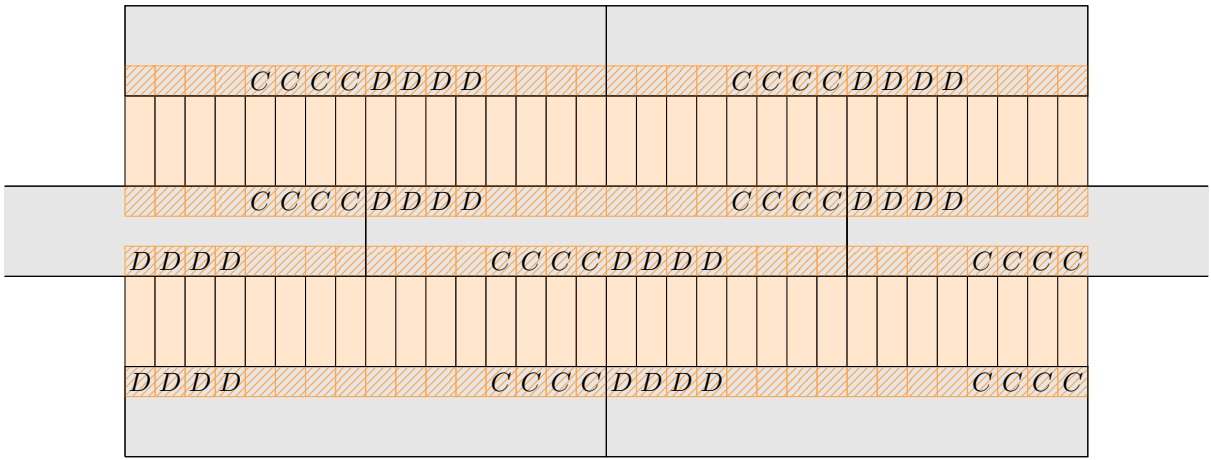
\begin{figure}[H]
\begin{center}
\begin{tikzpicture}[scale=0.4,pattern1/.style={draw=orange!70,pattern color=orange!70, pattern=north east lines}]

%%% encoders
\foreach \x in {0,16}
\foreach \y in {0,12} 
{
\draw [fill=gray!20] (\x,\y)--(\x+16,\y)--(\x+16,\y+3)--(\x,\y+3)--(\x,\y);
}

\foreach \x in {-8}
\foreach \y in {6} 
{
\draw [fill=gray!20] (\x+4,\y)--(\x+16,\y)--(\x+16,\y+3)--(\x+4,\y+3);
}
\foreach \x in {8}
\foreach \y in {6} 
{
\draw [fill=gray!20] (\x,\y)--(\x+16,\y)--(\x+16,\y+3)--(\x,\y+3)--(\x,\y);
}
\foreach \x in {24}
\foreach \y in {6} 
{
\draw [fill=gray!20] (\x+12,\y)--(\x,\y)--(\x,\y+3)--(\x+12,\y+3);
}

%%% linkers
\foreach \x in {0,...,31}
\foreach \y in {3,9} 
{
\draw [fill=orange!20] (\x,\y)--(\x+1,\y)--(\x+1,\y+3)--(\x,\y+3)--(\x,\y);
}

%%% intersection
\foreach \x in {0,...,31}
\foreach \y in {2,6,8,12} 
{
\filldraw [ pattern1] (\x+0,\y+0)--(\x+1,\y+0)--(\x+1,\y+1)--(\x+0,\y+1)--(\x+0,\y+0);
}

%%% encoders
\foreach \x in {0,16}
\foreach \y in {0,12} 
{
\draw  (\x,\y)--(\x+16,\y)--(\x+16,\y+3)--(\x,\y+3)--(\x,\y);
}

\foreach \x in {8}
\foreach \y in {6} 
{
\draw  (\x,\y)--(\x+16,\y)--(\x+16,\y+3)--(\x,\y+3)--(\x,\y);
}
\foreach \x in {-8}
\foreach \y in {6} 
{
\draw  (\x+4,\y)--(\x+16,\y)--(\x+16,\y+3)--(\x+4,\y+3);
}
\foreach \x in {24}
\foreach \y in {6} 
{
\draw  (\x+12,\y)--(\x,\y)--(\x,\y+3)--(\x+12,\y+3);
}

\foreach \x in {0,16}
\foreach \y in {2,6} 
{
\node at (\x+0.5,\y+0.5) {$D$};  
\node at (\x+1.5,\y+0.5) {$D$};  
\node at (\x+2.5,\y+0.5) {$D$};  
\node at (\x+3.5,\y+0.5) {$D$};  
}

\foreach \x in {8,24}
\foreach \y in {8,12} 
{
\node at (\x+0.5,\y+0.5) {$D$};  
\node at (\x+1.5,\y+0.5) {$D$};  
\node at (\x+2.5,\y+0.5) {$D$};  
\node at (\x+3.5,\y+0.5) {$D$};  
}

\foreach \x in {4,20}
\foreach \y in {8,12} 
{
\node at (\x+0.5,\y+0.5) {$C$};  
\node at (\x+1.5,\y+0.5) {$C$};  
\node at (\x+2.5,\y+0.5) {$C$};  
\node at (\x+3.5,\y+0.5) {$C$};  
}

\foreach \x in {12,28}
\foreach \y in {2,6} 
{
\node at (\x+0.5,\y+0.5) {$C$};  
\node at (\x+1.5,\y+0.5) {$C$};  
\node at (\x+2.5,\y+0.5) {$C$};  
\node at (\x+3.5,\y+0.5) {$C$};  
}

\end{tikzpicture}
\end{center}
\caption{$3$-dimensional projection of the tiling pattern in the matching layers.}\label{fig_4d_pattern}
\end{figure}

So after the building blocks for encoding the colors have been lifted to $4$-dimensional, the four consecutive building blocks $D^{(4)}$ of one encoder must be matched to the four consecutive $D^{(4)}$ of another encoder in the matching layers. For the same reason, four consecutive building blocks $C^{(4)}$ of one encoder must be matched to the four consecutive $C^{(4)}$ of another encoder in the matching layers. Therefore, we still have the same tiling pattern in the matching layers (see Figure \ref{fig_4d_pattern}, we omit the superscript $^{(4)}$ for building blocks $C^{(4)}$ and $D^{(4)}$ in this figure). As a consequence, two neighboring encoders in the south-north direction must be aligned in time. The building blocks $V^{(4)}$ and $v^{(4)}$ ensure that two neighboring encoders in the east-west direction are aligned in time. The building blocks $W^{(4)}$ and $w^{(4)}$ ensure that two neighboring encoders in the vertical direction are aligned in time. In all, the encoders are aligned with time. In exact words, given two arbitrary encoders in a tiling of the spacetime, they are either in two disjoint time slices or in the same slice of $10$ frames (ignoring the extra $4$ frames of $\mathbb{E}^{(4)}$).

\item (\textbf{The distributions of encoders are identical in every slice.}) For any encoder in a tiling, the building block $\mathbb{E}^{(4)}$ ensures that there must be another encoder in the same position of space in the next time slice and the previous time slice. So the $4$-dimensional spacetime is tiled by the translations of one slice to the past and the future, while each slice is more or less a thick version of the $3$-dimensional tiling we obtained in the previous section. This is a key feature of the lifting technique.

\item (\textbf{The patterns formed by the encoding layers are the same for every matching layer.}) Within a time slice, the encoders must form two-way infinite vertical columns by the pair of building blocks $W^{(4)}$ and $w^{(4)}$. Therefore, the tilings of the encoders are periodic in the vertical direction.

\item (\textbf{Tilability of the set of three tiles.}) Finally, we check whether the set of three tiles can tile the $4$-dimensional spacetime. For the non-encoding layers of the encoders and the interlocking layers of the linkers, the $4$-dimensional tiles are exactly the same as the $3$-dimensional tiles, they fill up the space together and remain unchanged over time. Similarly to the $3$-dimensional case, the gaps between the padding layers of the linkers and the encoding layers of the encoders can be exactly filled by the fillers, and this can be done in the same way for every slice. Therefore, the tilability of the spacetime depends on the tilability in the matching layers. As we have argued, the matching layer must form the pattern as illustrated in Figure \ref{fig_4d_pattern}. Our linker can now be translated in the direction of time to match two building blocks of the same type. For two building blocks $C^{(4)}$, the linkers can matched them by being aligned in time with regard to the encoders (i.e. in the same slice as the encoders). For two building blocks $D^{(4)}$, the linkers can match them by being translated $5$ frames to the future or to the past, with regard to the time frames of the encoders. Therefore, the set of three tiles can tile the spacetime if and only if building blocks of the encoding layers of the encoders exposed to the matching layers are all matched by the linkers. This in turn is equivalent to that the matching layers simulate a tiling of the corresponding set of the Wang tiles. 
\end{itemize}
Thus the set of three tiles can tile the $4$-dimensional space if and only if the corresponding set of the Wang tiles can tile the $2$-dimensional plane. This completes the proof.
\end{proof}

\section{Conclusion}\label{sec_conclu}
In this paper, we prove that translational tiling of $4$-dimensional space with a set of three connected tiles is undecidable by combining the techniques we developed in our previous works \cite{yang23,yang23b,yz24,yz24a,yz24b,yz24c} and several novel techniques we introduce in this paper. This is another step forward in the search for an answer to the following problem.

\begin{Problem}[Problem \ref{pro_main} with fixed parameters $(n,1)$]\label{pro_focus}
    Is there a fixed positive integer $n$ such that translational tiling of $\mathbb{Z}^n$ with a monotile is undecidable?
\end{Problem}

With our result in this paper that translational tiling is undecidable for three tiles, we are just two steps away from a full solution to Problem \ref{pro_focus}. So it is very tempting to continue working on the undecidability of translational tiling with one or two tiles of a fixed space $\mathbb{Z}^n$.

We conclude by making a comparison of the undecidability between the translational tiling problem and the general tiling problem (i.e. tiles are allowed to be rotated or reflected in the general tiling problem). For the $2$-dimensional case, the translation tiling problem is known to be decidable with a single tile. But it is still open whether it is decidable to tile the plane with a single tile allowing rotation. The recent discovery of aperiodic monotiles \cite{smith23a,smith23b} suggests that the general tiling problem with a monotile may be undecidable in the plane. In this sense, the general tiling problem is more complex than the translational tiling problem. Demaine and Langerman showed that the general tiling of the plane with three polygons is undecidable \cite{dl24}. This is the best known undecidability result for the general tiling problem with respect to the number of tiles. It is also very tempting to investigate the undecidability of the general tiling problem of higher dimensional spaces with one or two tiles.

\section*{Acknowledgements}
The first author was supported by the Research Fund of Guangdong University of Foreign Studies (Nos. 297-ZW200011 and 297-ZW230018), and the National Natural Science Foundation of China (No. 61976104).

%\section*{Data Availability Statement}
%This paper has no associated data.

%\newpage

\end{document}